\documentclass[11pt,a4paper]{article}
\usepackage{epsf,epsfig,amsfonts,amsgen,amsmath,amssymb,amstext,amsbsy,amsopn,amsthm,lineno}
\usepackage{color}
\usepackage{cite}
\usepackage{subfig}
\usepackage{float}
\usepackage{bm}
\usepackage{graphicx,tikz}
\usepackage[colorlinks=true,citecolor=black,linkcolor=black,urlcolor=black]{hyperref}
\hypersetup{colorlinks,
    linkcolor=blue, 
    anchorcolor=blue,
    citecolor=blue}

\newtheorem{theorem}{Theorem}[section]

\newtheorem{lemma}{Lemma}[section]
\newtheorem{remark}{Remark}[section]

\newtheorem{cor}{Corollary}[section]

\theoremstyle{definition}

\newtheorem{claim}{Claim}

\newtheorem{case}{Case}

\tikzstyle{vertex}=[circle, draw, inner sep=0pt, minimum size=3pt]

\numberwithin{equation}{section}
\allowdisplaybreaks

\def\qed{\hfill$\Box$\vspace{12pt}}

\begin{document}
\title
{\bf\Large Signless Laplacian spectral conditions for rainbow matchings in a collection of bipartite graphs\thanks{Supported by the National Natural Science Foundation of China (Nos.~12301456, 12471334), the Support Program for Outstanding Young Talents in Shaanxi Universities (No. 202120009), the Graduate Research and Innovation Project of Yan'an University (YKY2026033), and the Doctoral Research Foundation of Yan'an University (No. YDBK2021-03).}
}

\date{}
\author{
\small Zhiwei Guo\thanks{Corresponding author.\newline
\small E-mail addresses: zhiweiguo@yau.edu.cn (Z.W. Guo),
\small nanxipan@163.com (N.X. Pan), nwlily@yau.edu.cn (L. Li),
\small yychenmath@163.com (Y.Y. Chen).},\quad Nanxi Pan,\quad Li Li,\quad \small Yangyang Chen\\[2mm]
\small School of Mathematics and Computer Science, Yan'an
\small University,\\
\small Yan'an, Shaanxi 716000, PR~China\\[0.3cm]}
\maketitle

\begin{abstract}
Let ${\cal G}=\{G_1,\ldots,G_k\}$ be a collection of (not necessarily distinct) bipartite graphs on the same vertex bipartition $(X,Y)$, where $|X|=a$, $|Y|=b$ and $2\le k\le a\le b$. A \emph{rainbow matching} of ${\cal G}$ is a set of pairwise disjoint edges that can be chosen from distinct members of ${\cal G}$. Denote by $q(G)$ the signless Laplacian spectral radius of a graph $G$. In this paper, we prove that if $q(G_i)\ge b+k-1$ for each $i\in\{1,2,\ldots,k\}$, then ${\cal G}$ admits a rainbow matching of size $k$ unless $G_1=\cdots=G_k\cong K_{k-1,b}\cup\overline{K_{a-k+1}}$, and show that the threshold is sharp and attained by the exceptional collection. The condition is also extended to larger collections for a prescribed level $t$. In addition, we obtain a lower bound for the rainbow matching number in terms of the ordered signless Laplacian spectral radii of the members, and provide a stability version of the extremal characterization. In the proofs, we use the shifting technique and a quotient matrix arising from an equitable partition of a signless Laplacian matrix.
\medskip

\noindent {\bf Keywords:} collection of bipartite graphs; signless Laplacian spectral radius; rainbow matching; rainbow matching number; shifting technique\\
\noindent {\bf Mathematics Subject Classification:} 05C15; 05C35; 05C50.
\smallskip
\end{abstract}

\section{Introduction}
Throughout this paper, we only consider finite, simple and undirected graphs. For terminology and notation not defined here, we refer the reader to Bondy and Murty \cite{Bondy}.

For a graph $G$, let $V(G)$ and $E(G)$ denote the vertex set and edge set of $G$, respectively, and let $\nu(G)$ denote the matching number of $G$, i.e., the size of the maximum matching of $G$. We use $K_n$ and $\overline{K_n}$ to denote the complete graph and the empty graph on $n$ vertices, respectively, and we regard $\overline{K_0}$ as the empty graph. The \emph{union} of graphs ${G_1}$ and ${G_2}$, denoted by ${G_1}\cup {G_2}$, is a graph with vertex set ${V({G_1})}\cup {V({G_2})}$ and edge set ${E({G_1})}\cup {E({G_2})}$. The \emph{join} of disjoint graphs ${G_1}$ and ${G_2}$, denoted by ${G_1}\vee {G_2}$, is a graph with vertex set ${V({G_1})}\cup {V({G_2})}$ and edge set consisting of $E({G_1})$, $E({G_2})$ and all the edges joining every vertex of ${G_1}$ to every vertex of ${G_2}$. Two graphs ${G_1}$ and ${G_2}$ are said to be \emph{isomorphic}, denoted by
${G_1}\cong{G_2}$, if there exists a bijection $f:V({G_1})\to V({G_2})$ such that $uv\in E({G_1})$ if and only if $f(u)f(v)\in E({G_2})$. In particular, we write ${G_1}={G_2}$ if $V({G_1})=V({G_2})$ and $E({G_1})=E({G_2})$. Let $a$ and $b$ be positive integers, and let $K_{a,b}$ denote the complete bipartite graph on vertex bipartition $(X, Y)$ with $|X|=a$ and $|Y|=b$. For positive integers $n$, $a$ and $b$, we denote by $[n]$ and $[a,b]$ the sets $\{{1, 2,\ldots, n}\}$ and $\{{a,\ldots, b}\}$, respectively.

The \emph{adjacency matrix} of a graph $G$ is defined to be $A(G)={({a_{ij}})_{n\times n}}$, where ${a_{i,j}}=1$ if and only if $ij\in E(G)$ and ${a_{i,j}}=0$ otherwise. The \emph{adjacency spectral radius} $\rho(G)$ of a graph $G$ refers to the maximum eigenvalue of $A(G)$ of $G$. The \emph{signless Laplacian matrix} of a graph $G$ is defined to be $Q(G)=D(G)+A(G)$, where $D(G)=diag({{d_1}, \ldots, {d_n}})$ is the degree diagonal matrix of $G$ and $A(G)$ is the adjacency matrix of $G$. The \emph{signless Laplacian spectral radius} (also known as the $Q$-spectral radius) $q(G)$ of a graph $G$ is defined to be the maximum eigenvalue of the signless Laplacian matrix $Q(G)$ of $G$. Thus, the signless Laplacian matrix $Q(G)$ encodes the vertex degree information of $G$. Let $G$ be a bipartite graph on vertex bipartition $(X, Y)$, and let $R$ be the diagonal matrix with entries $1$ on $X$ and $-1$ on $Y$. Then $RQ(G)R=L(G)=D(G)-A(G)$, where $L(G)$ denotes the Laplacian matrix of $G$. This implies that $Q(G)$ and $L(G)$ are similar matrices, and that $q(G)$ is also the largest Laplacian eigenvalue of $G$.

Let ${\cal G}=\{G_1,\ldots,G_m\}$ be a collection of not necessarily distinct bipartite graphs on the same vertex bipartition $(X,Y)$. Since the indices are retained in the collection, the equal graphs (if exist) are treated as distinct members of the collection. A matching $M=\{e_1,\ldots,e_r\}$ is called a \emph{rainbow matching} of ${\cal G}$ if its edges can be chosen from pairwise distinct members of ${\cal G}$; equivalently, there are distinct indices $i_1,\ldots,i_r\in[m]$ such that $e_j\in E(G_{i_j})$ for each $j\in[r]$. The size of the maximum rainbow matching of ${\cal G}$ is called the \emph{rainbow matching number} of ${\cal G}$, denoted by $\nu_r({\cal G})$.

The existence of rainbow structures in collections of graphs has aroused extensive research interest. In 2020, Aharoni et al. \cite{Aharoni} proved a rainbow version of Mantel's Theorem by considering the existence of rainbow triangles in a collection of graphs. Aharoni et al. \cite{Aharoni} also proposed a conjecture on the existence of rainbow Hamilton cycles in a collection of graphs. In 2021, Cheng et al. \cite{Cheng-Wang} proved that the conjecture due to Aharoni et al. \cite{Aharoni} asymptotically hold. Later, Joos and Kim \cite{Joos} completely verified the conjecture due to Aharoni et al. \cite{Aharoni}, and established the rainbow version of Dirac's Theorem. Li et al. \cite{Li-Li} proved the existence of rainbow spanning trees, rainbow Hamilton paths, rainbow vertex-pancyclicity and other rainbow structures in a collection of graphs, under the Ore-type conditions and Dirac-type conditions. Cheng et al. \cite{Cheng-Sun} further proposed a minimum degree condition for the existence of rainbow Hamilton paths in a collection of graphs, improving the conditions presented by Li \cite{Li-Li}. For more results, we refer the readers to the survey \cite{Sun}.

On the other hand, the spectral conditions for the existence of rainbow structures in a collection of graphs have also been established. Guo et al. \cite{Guo} proved the existence of rainbow matchings in a collection of graphs, under the adjacency spectral conditions. Subsequently, He et al. \cite{He} proved the existence of rainbow Hamilton paths and rainbow linear forests in a collection of graphs, under the adjacency spectral conditions. Later, Zhang et al. \cite{Y.Zhang} proved the existence of rainbow Hamilton cycles in a collection of graphs, under the adjacency spectral conditions and the signless Laplacian spectral conditions. Zhang et al. \cite{X.Zhang} and Wang et al. \cite{Wang} independently proposed signless Laplacian spectral conditions to guarantee the existence of rainbow matchings and Hamilton paths in a collection of graphs. Zhang et al. \cite{L.Zhang} presented adjacency spectral conditions for the existence of rainbow $k$-factors in a collection of graphs. Later, Guo et al. \cite{Guo-Pan} established signless Laplacian spectral conditions for the existence of rainbow $k$-factors in a collection of graphs. Recently, Shi et al. \cite{Shi} gave adjacency spectral conditions for the existence of rainbow matchings in collections of bipartite graphs. Chen et al. \cite{Chen} established adjacency spectral conditions for rainbow Hamilton cycles in collections of bipartite graphs.

In this paper, we establish the best possible signless Laplacian spectral conditions for the existence of rainbow matchings of given size in a collection of bipartite graphs, and we extend the conditions to larger collections for a prescribed level $t$. In addition, we obtain a lower bound for the rainbow matching number in terms of the ordered signless Laplacian spectral radii of the members, and provide a stability version of the extremal characterization, showing that near-extremal signless Laplacian spectral radii force the collection to be close to the extremal collection. In the proofs, we use the shifting technique and a quotient matrix arising from an equitable partition of a signless Laplacian matrix.

\section{Main results}

We first establish signless Laplacian spectral conditions for the existence of rainbow matchings of given size in a collection of bipartite graphs, as follows. Let $H_k=K_{k-1,b}\cup\overline{K_{a-k+1}}$ (see Figure~\ref{fig:extremal}).

\begin{theorem}
\label{Thm1b}
Let $k,a,b$ be positive integers with $2\le k\le a\le b$, and let ${\cal G}=\{G_1,\ldots,G_k\}$ be a collection of bipartite graphs on the same vertex bipartition $(X,Y)$ with $|X|=a$ and $|Y|=b$. If $q(G_i)\ge b+k-1$ for each $i\in[k]$, then ${\cal G}$ admits a rainbow matching of size $k$ unless $G_1=\cdots=G_k\cong H_k$.
\end{theorem}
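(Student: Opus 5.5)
The plan is to argue by contradiction. Assume ${\cal G}$ has no rainbow matching of size $k$. We reduce to one member $G_i$ whose ordinary matching number is at most $k-1$, and then show by a spectral computation that such a member has $q(G_i)<b+k-1$ unless $G_i\cong H_k$. Two preliminary facts will be used repeatedly. First, $q(H_k)=q(K_{k-1,b})=b+k-1$. Second, $q$ is strictly monotone under adding edges inside a connected component carrying the Perron vector. Together these settle the case $G_i\subseteq H_k$: either $G_i=H_k$ up to the labelling of $X$, or $q(G_i)<b+k-1$.

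\emph{Step 1 (shifting).} Fix orderings $x_1,\dots,x_a$ of $X$ and $y_1,\dots,y_b$ of $Y$. Apply the standard shifting operators $S_{x_jx_l}$ and $S_{y_jy_l}$ simultaneously to all members of ${\cal G}$. Such an operator replaces an edge $x_lv$ by $x_jv$ (for $j<l$) whenever $x_jv$ is not already present. I will verify two things.
\begin{itemize}
\item Simultaneous shifting cannot create a rainbow matching of size $k$. This is the usual exchange argument: a rainbow matching in the shifted family can be pulled back by swapping the roles of $x_j$ and $x_l$ in at most two edges.
\item Shifting does not decrease $q$. Take a Perron vector $\mathbf{z}$ of $Q(G_i)$ and, by symmetry, assume $z_{x_j}\ge z_{x_l}$ for the pair being shifted. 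Then $\mathbf{z}^{T}Q\mathbf{z}=\sum_{uv\in E}(z_u+z_v)^2$ does not decrease, and the Rayleigh quotient gives the claim.
\end{itemize}
After finitely many steps every $G_i$ is a Ferrers (staircase) graph with respect to the common orderings. The family still has no rainbow $k$-matching and still satisfies $q(G_i)\ge b+k-1$.

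\emph{Step 2 (structure of shifted families).} For Ferrers graphs with nested neighbourhoods, a rainbow matching of size $k$ exists if and only if the members can be assigned to the ``anti-diagonal'' edges $x_jy_{k+1-j}$, $j\in[k]$, with each edge lying in its assigned member. This is a Hall-type condition on a chain of nested edge sets. Each $G_i$ has $\nu(G_i)\ge k$ exactly when it contains all $k$ anti-diagonal edges. Because the graphs are Ferrers with a common ordering, this forces a greedy assignment to succeed. Hence the failure of a rainbow $k$-matching forces some member $G_i$ with $\nu(G_i)\le k-1$.

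As an alternative route to the same conclusion, I would use induction on $k$. The subfamily $\{G_1,\dots,G_{k-1}\}$ satisfies $q\ge b+k-1>b+k-2=q(H_{k-1})$, so by induction it has a rainbow $(k-1)$-matching. In the shifted setting, that matching extends using $G_k$ unless $\nu(G_k)\le k-1$.

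\emph{Step 3 (spectral bound, the main obstacle).} Let $\nu(G)\le k-1$. By König's theorem, $G$ has a vertex cover $S_X\cup S_Y$ with $|S_X|=s$, $|S_Y|=t$ and $s+t\le k-1$. Thus $G$ is a subgraph of $F_{s,t}$, the graph in which every vertex of $S_X$ is joined to all of $Y$ and every vertex of $S_Y$ is joined to all of $X$.

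Partition $V(F_{s,t})$ into $S_X$, $X\setminus S_X$, $S_Y$, $Y\setminus S_Y$. This is an equitable partition, so $q(F_{s,t})$ is the largest root of the $4\times 4$ quotient matrix
\[
\begin{pmatrix} b & 0 & t & b-t\\ 0 & t & t & 0\\ s & a-s & a & 0\\ s & 0 & 0 & s\end{pmatrix}.
\]
I will show that this root is strictly less than $b+k-1$ whenever $t\ge 1$, using $a\le b$ and $s+t\le k-1\le a-1$. The method is to evaluate the characteristic polynomial at $\lambda=b+k-1$, show it is positive there, and show it is increasing beyond that point. A cruder option is to bound by $\max_{uv}(d_u+d_v)$ over suitable edges, but that is likely too weak for the $S_X$–$S_Y$ edges, so I expect this polynomial estimate to be the hardest computation.

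When $t=0$, we have $F_{s,0}\subseteq H_k$, and the preliminary facts give equality exactly when $G\cong H_k$.

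\emph{Step 4 (equality case).} It remains to undo the shifting. If some original $G_i\ne H_k$, strict monotonicity in Steps 1 and 3 should yield a strict inequality somewhere along the chain, contradicting $q(G_i)\ge b+k-1$. We must also check that if one member equals $H_k$ but the members are not all equal, then a rainbow $k$-matching exists. This follows because any other member with $q\ge b+k-1$ must contain an edge at a vertex of $X$ outside the $k-1$ centres (otherwise it is a subgraph of a copy of $K_{k-1,b}$), and that edge combines with $k-1$ edges of the copies of $H_k$.
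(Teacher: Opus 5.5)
Your Steps 1--3 follow the paper's machinery closely. You use simultaneous shifting and the same $4\times4$ quotient matrix: your $F_{s,t}$ is the paper's $H_r$ with $h=s$, $p=t$. Reaching $F_{s,t}$ via K\"onig's theorem rather than via a missing antidiagonal edge is a harmless variation.

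The genuine gap is in Step 4. Step 2 produces only \emph{one} shifted member with $\nu(S_i)\le k-1$, so Step 3 identifies only that one member as a copy of $H_k$. Your argument for the other members says that an edge outside the $k-1$ centres ``combines with $k-1$ edges of the copies of $H_k$''. At that point, however, only one copy is known. For $k\ge3$ you have no structural information about the remaining $k-2$ members beyond $q\ge b+k-1$, so the rainbow matching cannot be completed.

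You need a statement about \emph{every} member, and it can be extracted from your own Steps 2--3. Each shifted member $S_j$ falls into one of two cases:
\begin{itemize}
\item $\nu(S_j)\ge k$, in which case $S_j$ contains all of $e_1,\dots,e_k$, where $e_j=x_jy_{k+1-j}$;
\item $\nu(S_j)\le k-1$, in which case $q(S_j)\ge b+k-1$ forces $S_j=K_{\{x_1,\dots,x_{k-1}\},Y}$ or (only when $a=b$) $S_j=K_{X,\{y_1,\dots,y_{k-1}\}}$. Both of these contain $e_2,\dots,e_{k-1}$ and exactly one of $e_1,e_k$.
\end{itemize}
If two distinct members contain $e_1$ and $e_k$ respectively, you get a rainbow $k$-matching. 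This forces all shifted members to be one and the same copy. That is exactly what the paper's Claim~\ref{claim1a} (every $S_i$ contains $e_2,\dots,e_{k-1}$) and its case analysis on $e_1,e_k$ accomplish. Even then, un-shifting gives only $G_i\cong H_k$. You still need a Hall-type argument, as in the paper, to show that the copies coincide on the fixed bipartition.

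Three further points need repair.
\begin{itemize}
\item \textbf{Step 3's strict inequality.} The claim that $q(F_{s,t})<b+k-1$ whenever $t\ge1$ is false. Take $s=0$, $t=k-1$, $a=b$, where your partition degenerates. Then $F_{0,k-1}=K_{a,k-1}\cup\overline{K_{b-k+1}}$ has $q=a+k-1=b+k-1$. This mirror copy of $H_k$ is the paper's case where $e_1$ is absent, and it must be carried through the equality analysis.
\item \textbf{Un-shifting.} This step is only asserted. Step 1 shows merely that $q$ does not decrease. The strict inequality of Lemma~\ref{lemma2b} requires connectedness, whereas $H_k$ is always disconnected and $G_i$ may be too. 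You need an equality statement such as Lemma~\ref{lemma-equality-shift}, which shows that a shift merging two nontrivial components strictly increases $q$.
\item \textbf{The inductive alternative to Step 2.} It does not work as stated, because a rainbow $(k-1)$-matching need not extend through a member with $\nu\ge k$. For $k=2$, take $S_1=\{x_1y_1\}$ and $S_2=\{x_1y_1,x_1y_2,x_2y_1\}$. They have no rainbow $2$-matching although $\nu(S_2)=2$.
\end{itemize}
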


\begin{remark}
The threshold in Theorem~\ref{Thm1b} is sharp. We can verify that a collection consisting of $k$ copies of $H_k$ contains no rainbow matching of size $k$, and that $q(H_k)=b+k-1$ and $\nu(H_k)=k-1$.
\end{remark}

\begin{figure}[htbp]
\begin{center}
\begin{tikzpicture}[scale=0.7,auto,swap]
\tikzstyle{blackvertex}=[circle,draw=black,fill=black]
  \node[label=left: $v_1$,blackvertex,scale=0.3] (a1) at (-4.0,5.5) {};
  \node[blackvertex,scale=0.3] (a2) at (-4.0,3.9) {};
  \node[label=left: $v_{k-1}$,blackvertex,scale=0.3] (a3) at (-4.0,2.4) {};
 \node[label=left: $v_k$,blackvertex,scale=0.3] (a4) at (-4.0,1.7) {};
 \node[label=left: $v_a$,blackvertex,scale=0.3] (a5) at (-4.0,0.6) {};

 \node[label=right: $v_1'$,blackvertex,scale=0.3] (a6) at (1.5,5.8) {};
 \node[blackvertex,scale=0.3] (a7) at (1.5,3.2) {};
 \node[label=right: $v_b'$,blackvertex,scale=0.3] (a8) at (1.5,0.2) {};

 \node [label=right: {$X$}] at (-4.7,-0.9) {};
 \node [label=right: {$Y$}] at (0.9,-0.9) {};
 \draw  (-4,3) ellipse (2.2 and 3);
 \draw  (1.5,3) ellipse (2.2 and 3.4);
 \draw [black,thick] (a1) -- (a6);
 \draw [black,thick] (a1) -- (a7);
 \draw [black,thick] (a1) -- (a8);
 \draw [black,thick] (a2) -- (a6);
 \draw [black,thick] (a2) -- (a7);
 \draw [black,thick] (a2) -- (a8);
 \draw [black,thick] (a3) -- (a6);
 \draw [black,thick] (a3) -- (a7);
 \draw [black,thick] (a3) -- (a8);
\end{tikzpicture}
\caption{The graph $H_k$.}
\label{fig:extremal}
\end{center}
\end{figure}

From Theorem \ref{Thm1b}, by setting $k=a=b=n$, the following corollary is immediate.

\begin{cor}
\label{cor1b}
Let $n$ be a positive integer, and let ${\cal G}=\{G_1,\ldots,G_n\}$ be a collection of bipartite graphs on the same vertex bipartition $(X,Y)$ with $|X|=|Y|=n$. If $q(G_i)\ge2n-1$ for each $i\in[n]$, then ${\cal G}$ admits a rainbow perfect matching unless $G_1=\cdots=G_n\cong K_{n-1,n}\cup\overline{K_1}$.
\end{cor}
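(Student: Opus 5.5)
The statement to prove is Corollary~\ref{cor1b}. My plan is to obtain it by specializing Theorem~\ref{Thm1b}, which we may assume.

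\textbf{Step 1: check the hypotheses.} Set $k=a=b=n$. Theorem~\ref{Thm1b} needs $2\le k\le a\le b$. Here $k\le a\le b$ holds trivially, so the only real requirement is $n\ge 2$.

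\textbf{Step 2: apply the theorem.} The spectral hypothesis becomes $q(G_i)\ge b+k-1=2n-1$, which is exactly the assumption of the corollary. A rainbow matching of size $k=n$ in a bipartite graph with parts of size $n$ covers every vertex, so it is a rainbow perfect matching. The exceptional graph is
\[
H_n=K_{n-1,b}\cup\overline{K_{a-n+1}}=K_{n-1,n}\cup\overline{K_1}.
\]
So Theorem~\ref{Thm1b} gives exactly the conclusion of the corollary.

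\textbf{Step 3: the boundary case $n=1$.} This is the only point needing care. Here $|X|=|Y|=1$, and the threshold is $q(G_1)\ge 1$. The graph with no edge has $q=0$, so the hypothesis forces $G_1$ to contain the unique possible edge. That edge is a rainbow perfect matching. Note that $K_{0,1}\cup\overline{K_1}$ is edgeless and therefore already violates the hypothesis, so no exception is needed and the statement still holds.

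There is no genuine obstacle. The only points to watch are that the range $n\ge2$ matches the theorem's requirement $k\ge2$, and that $n=1$ is handled directly as in Step 3.
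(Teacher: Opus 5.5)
Your proposal is correct and matches the paper's proof, which likewise just specializes Theorem~\ref{Thm1b} to $k=a=b=n$ and identifies the exceptional graph as $K_{n-1,n}\cup\overline{K_1}$. Your separate check of $n=1$ is worth keeping: the corollary allows any positive $n$, Theorem~\ref{Thm1b} requires $k\ge 2$, and the paper's one-line proof does not address that case.
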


Let $a,b,t$ be positive integers with $2\le t\le a\le b$, and let $\mathfrak E_{a,b,t}$ denote the set of all bipartite graphs on the same vertex bipartition $(X,Y)$ that are isomorphic to $K_{t-1,b}\cup\overline{K_{a-t+1}}$. We next extend Theorem~\ref{Thm1b} to larger collections for a prescribed level $t$, as follows.

\begin{theorem}
\label{Thm-high-members}
Let ${\cal G}=\{G_1,\ldots,G_m\}$ be a collection of bipartite graphs on the same vertex bipartition $(X,Y)$ with $|X|=a\le b=|Y|$, and let $t$ be an integer with $2\le t\le\min\{m,a\}$. If there exist at least $t$ graphs $G$ in ${\cal G}$  such that $q(G)\ge b+t-1$, then either ${\cal G}$ admits a rainbow matching of size $t$, or $\nu_r({\cal G})=t-1$ and there is $H\in\mathfrak E_{a,b,t}$ such that $E(G_i)\subseteq E(H)$ for each $i\in[m]$ and $G_i=H$ whenever $q(G_i)\ge b+t-1$.
\end{theorem}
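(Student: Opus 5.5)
Theorem~\ref{Thm-high-members} will be deduced from Theorem~\ref{Thm1b}, with an exchange argument to control the members of ${\cal G}$ that have small spectral radius. Let $I=\{i\in[m]: q(G_i)\ge b+t-1\}$, so $|I|\ge t$. Take any $t$-subset $S\subseteq I$. Since $2\le t\le a\le b$, Theorem~\ref{Thm1b} with $k=t$ applies to $\{G_i:i\in S\}$. It gives either a rainbow matching of size $t$, and then we are done, or $G_i=H_S$ for all $i\in S$, where $H_S\cong K_{t-1,b}\cup\overline{K_{a-t+1}}$ on the fixed bipartition, so $H_S\in\mathfrak E_{a,b,t}$.

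Suppose no rainbow matching of size $t$ exists. Then every $t$-subset of $I$ consists of equal graphs. If $|I|>t$, any two indices $i,j\in I$ lie in a common $t$-subset, so $G_i=G_j$. Hence there is a single $H\in\mathfrak E_{a,b,t}$ with $G_i=H$ for every $i\in I$. Let $A\subseteq X$ be the set of $t-1$ vertices of $X$ covered by $H$. Since $\nu(H)=t-1$, the graphs in $I$ already give a rainbow matching of size $t-1$, and so $\nu_r({\cal G})\ge t-1$. Once we show every $G_j$ satisfies $E(G_j)\subseteq E(H)$, every edge of every member meets $A$. Then any rainbow matching has size at most $|A|=t-1$, which gives $\nu_r({\cal G})=t-1$.

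The main step is to show $E(G_j)\subseteq E(H)$ for $j\notin I$. Suppose $G_j$ has an edge $e=xy$ with $x\in X\setminus A$; every edge of $G_j$ not in $H$ is of this form. We build a rainbow matching of size $t$. Since $H\supseteq K_{t-1,b}$ on $A\times Y$ and $b\ge t$, the set $Y\setminus\{y\}$ has at least $t-1$ vertices. Match the $t-1$ vertices of $A$ to distinct vertices of $Y\setminus\{y\}$ using edges of $H$, and assign these edges to $t-1$ distinct indices of $I$; this is possible because $|I|\ge t>t-1$. Adding $e$ from $G_j$, with $j\notin I$, gives a rainbow matching of size $t$, a contradiction. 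The same argument shows no member of $I$ can differ from $H$, which is already known. So all members satisfy $E(G_i)\subseteq E(H)$, and $G_i=H$ whenever $q(G_i)\ge b+t-1$.

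The hardest part of the argument is the reduction itself, namely checking that Theorem~\ref{Thm1b} applies to a $t$-subcollection. Its hypotheses require exactly $k=t$ members on the fixed bipartition with $2\le k\le a\le b$, and these hold here. The one remaining subtlety is the case $|I|=t$ exactly, where all members of $I$ equal $H$ directly by Theorem~\ref{Thm1b} and no union argument is needed. In that case the exchange step still has $t-1$ indices of $I$ available alongside $j\notin I$, so it goes through unchanged.
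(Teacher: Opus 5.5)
Your proposal is correct and follows the paper's proof essentially step for step. Both apply Theorem~\ref{Thm1b} to $t$-subsets of the high-$q$ indices, use overlapping $t$-subsets to obtain a single $H\in\mathfrak E_{a,b,t}$, and exclude any edge outside $H$ by combining it with a $(t-1)$-matching of $H$ that avoids its endpoints and is spread over distinct high-$q$ members.

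The one case you skip, which the paper handles explicitly, is $a=b$: there $H$ may have the opposite orientation $K_{X,B}$ with $B\subseteq Y$ and $|B|=t-1$, so your set $A\subseteq X$ of $t-1$ covered vertices does not exist. Your argument goes through unchanged after interchanging the roles of $X$ and $Y$.
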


For a collection ${\cal G}=\{G_1,\ldots,G_m\}$ of bipartite graphs on the same vertex bipartition $(X,Y)$ with $|X|=a\le b=|Y|$, let $q_1^\downarrow({\cal G})\ge\cdots\ge q_m^\downarrow({\cal G})$ denote the signless Laplacian spectral radii
of the members of ${\cal G}$ arranged in nonincreasing order, and let $T=\max\left(\{1\}\cup
\{t:2\le t\le s,\ q_t^\downarrow(G)\ge b+t-1\}\right)$, where $s=\min\{m,a\}$. The following Theorem~\ref{thm-quant} establishes a lower bound for the rainbow matching number in terms of the ordered signless Laplacian spectral radii of the members.
\begin{theorem}
\label{thm-quant}
Let ${\cal G}=\{G_1,\ldots,G_m\}$ be a collection of bipartite graphs on the same vertex bipartition $(X,Y)$ with $|X|=a\le b=|Y|$. Then $\nu_r({\cal G})\ge T-1$ and the equality holds if and only if there exists $H\in\mathfrak E_{a,b,T}$ such that $E(G_i)\subseteq E(H)$ for each $i\in[m]$ and at least $T$ members are equal to $H$.
\end{theorem}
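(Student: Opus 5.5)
We plan to derive Theorem~\ref{thm-quant} directly from Theorem~\ref{Thm-high-members}, splitting into the cases $T=1$ and $T\ge2$.

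\emph{The case $T=1$.} The bound $\nu_r({\cal G})\ge 0$ is trivial, so we only need the equality characterization. Equality means $\nu_r({\cal G})=0$, i.e.\ every $G_i$ is edgeless. The set $\mathfrak E_{a,b,1}$ is not formally covered by the definition of $\mathfrak E_{a,b,t}$, which requires $t\ge2$. Read literally, $K_{0,b}\cup\overline{K_a}$ is the empty graph on $(X,Y)$. So the condition ``$E(G_i)\subseteq E(H)$ for all $i$ and at least one member equals $H$'' holds precisely when all members are edgeless. This gives the equivalence, assuming $m\ge1$. We will state this convention explicitly when writing the proof.

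\emph{The case $T\ge2$: the lower bound.} By the definition of $T$ we have $2\le T\le s=\min\{m,a\}$ and $q_T^\downarrow({\cal G})\ge b+T-1$. Hence at least $T$ members $G$ of ${\cal G}$ satisfy $q(G)\ge b+T-1$. Theorem~\ref{Thm-high-members} with $t=T$ then gives one of two outcomes: either $\nu_r({\cal G})\ge T$, or $\nu_r({\cal G})=T-1$ together with the structural description. In both cases $\nu_r({\cal G})\ge T-1$.

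\emph{The case $T\ge2$: equality, forward direction.} If $\nu_r({\cal G})=T-1$, the first alternative is excluded. So there is $H\in\mathfrak E_{a,b,T}$ with $E(G_i)\subseteq E(H)$ for all $i$, and $G_i=H$ whenever $q(G_i)\ge b+T-1$. Since at least $T$ members satisfy this spectral inequality, at least $T$ members equal $H$.

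\emph{The case $T\ge2$: equality, converse direction.} Suppose such an $H$ exists. Every edge of every $G_i$ lies in $H\cong K_{T-1,b}\cup\overline{K_{a-T+1}}$, whose edges all meet the $T-1$ nonisolated vertices of $X$. Any matching in $H$, rainbow or not, therefore has size at most $T-1$, so $\nu_r({\cal G})\le T-1$. Combined with the lower bound, this gives equality.

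\emph{A further check.} The converse uses nothing about $T$ being the maximum, which is fine. The forward direction uses only that $T$ satisfies the threshold condition. One might worry that ``at least $T$ members equal to $H$'' forces $q_T^\downarrow({\cal G})=q(H)=b+T-1$, so that $T$ is consistent with the definition. This is automatic, and no contradiction arises.

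The main obstacle is only the bookkeeping in the degenerate case $T=1$, namely the interpretation of $\mathfrak E_{a,b,1}$. Everything else is a direct application of Theorem~\ref{Thm-high-members} plus the trivial upper bound $\nu(H)=T-1$.
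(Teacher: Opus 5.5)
Your proposal is correct and follows essentially the same route as the paper. Theorem~\ref{Thm-high-members} with $t=T$ gives both the bound and the forward direction of the equality case, and $\nu(H)=T-1$ gives the converse; note that when $a=b$ the $(T-1)$-element side of $H$'s nontrivial component may lie in $Y$ rather than $X$, which does not affect your covering argument. Your explicit convention that $\mathfrak E_{a,b,1}$ consists of the edgeless graph is actually needed for the equality statement when $T=1$, a point the paper passes over by calling that case immediate.
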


For positive integers $a,b,m,t$, let $\mathfrak C_t$ be the set of collections ${\cal F}=\{F_1,\ldots,F_m\}$ such that for some $H\in\mathfrak E_{a,b,t}$ and each $i\in[m]$, $F_i$ is a spanning subgraph of $H$ and at least $t$ members of ${\cal F}$ are equal to $H$, and define
\[
d_\triangle({\cal G},\mathfrak C_t)
=\min_{{\cal F}\in\mathfrak C_t}\max_{i\in[m]}
\frac{|E(G_i)\triangle E(F_i)|}{ab},
\]
where corresponding members retain their indices. The following Corollary~\ref{cor-finite-gap} establishes finite extremal gap.

\begin{cor}
\label{cor-finite-gap}
Let ${\cal G}=\{G_1,\ldots,G_m\}$ be a collection of bipartite graphs on the same vertex bipartition $(X,Y)$ with $|X|=a\le b=|Y|$, and let $t$ be an integer with $2\le t\le a\le b$ and $m\ge t$. Then for every $\varepsilon>0$ there exists $\Delta=\Delta(a,b,m,t,\varepsilon)>0$ such that if $\nu_r({\cal G})<t$ and $q_t^\downarrow({\cal G})\ge b+t-1-\Delta$, then $d_\triangle({\cal G},\mathfrak C_t)<\varepsilon$.
\end{cor}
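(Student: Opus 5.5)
We argue by compactness, since for fixed $a,b,m$ the configuration space is finite. A collection ${\cal G}=\{G_1,\ldots,G_m\}$ of bipartite graphs on the fixed bipartition $(X,Y)$ is determined by the $m$ edge sets $E(G_i)\subseteq X\times Y$. Hence there are at most $2^{abm}$ such collections, and the whole statement reduces to a statement about finitely many objects. Let ${\mathcal S}$ be the finite set of collections with $\nu_r({\cal G})<t$ and $d_\triangle({\cal G},\mathfrak C_t)\ge\varepsilon$. Note that $m\ge t$ and $t\le a$, so $q_t^\downarrow({\cal G})$ is defined.

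The key step is to show that every ${\cal G}\in{\mathcal S}$ satisfies $q_t^\downarrow({\cal G})<b+t-1$. Suppose instead that $q_t^\downarrow({\cal G})\ge b+t-1$. Then at least $t$ members of ${\cal G}$ satisfy $q(G)\ge b+t-1$, and $2\le t\le\min\{m,a\}$. By Theorem~\ref{Thm-high-members}, since ${\cal G}$ has no rainbow matching of size $t$, there is $H\in\mathfrak E_{a,b,t}$ with two properties:
\begin{itemize}
\item[(i)] $E(G_i)\subseteq E(H)$ for each $i\in[m]$;
\item[(ii)] $G_i=H$ whenever $q(G_i)\ge b+t-1$, and there are at least $t$ such indices.
\end{itemize}
Thus ${\cal G}$ itself lies in $\mathfrak C_t$, witnessed by ${\cal F}={\cal G}$. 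This gives $d_\triangle({\cal G},\mathfrak C_t)=0<\varepsilon$, a contradiction. (Alternatively, one can use Theorem~\ref{thm-quant} directly: $q_t^\downarrow\ge b+t-1$ forces $T\ge t$, while $\nu_r<t$ gives $\nu_r\le t-1\le T-1$, so equality holds with $T=t$ and the equality characterization applies.)

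Now define $\Delta$ as follows. If ${\mathcal S}=\emptyset$, take $\Delta=1$. Otherwise set
\[
\Delta=\min_{{\cal G}\in{\mathcal S}}\bigl(b+t-1-q_t^\downarrow({\cal G})\bigr),
\]
which is a minimum of finitely many positive reals and hence positive. It depends only on $a,b,m,t,\varepsilon$, because ${\mathcal S}$ does. Now take any ${\cal G}$ with $\nu_r({\cal G})<t$ and $q_t^\downarrow({\cal G})\ge b+t-1-\Delta$. If $d_\triangle({\cal G},\mathfrak C_t)\ge\varepsilon$, then ${\cal G}\in{\mathcal S}$, so $q_t^\downarrow({\cal G})\le b+t-1-\Delta$. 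Equality must then hold, and we need strict inequality to finish.

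The main subtlety is exactly this boundary case. To get a strict inequality, replace $\Delta$ by half the minimum above; it is still positive. Then every member of ${\mathcal S}$ has $q_t^\downarrow({\cal G})\le b+t-1-2\Delta<b+t-1-\Delta$, which is incompatible with the hypothesis. Hence $d_\triangle({\cal G},\mathfrak C_t)<\varepsilon$, as required. The only other point needing care is that $\mathfrak C_t$ is nonempty, so that $d_\triangle$ is finite. This holds because $m\ge t$ and $t\le a$: taking all $m$ members equal to some $H\in\mathfrak E_{a,b,t}$ gives a collection in $\mathfrak C_t$.
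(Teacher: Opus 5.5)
Your proposal is correct and matches the paper's proof essentially step for step. Like the paper, you use finiteness of the set of bad collections, invoke Theorem~\ref{Thm-high-members} to show each bad collection has $q_t^\downarrow<b+t-1$, and take $\Delta$ as half the minimal gap, which is exactly the paper's $(b+t-1-M_\varepsilon)/2$.
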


\section{Preliminaries}
In this section, we introduce the shifting technique, and present several lemmas that will be used in the proofs of the main results.

The shifting technique\cite{Frankl1}, also known as the Kelmans operation \cite{Kelmans}, has been found to be extremely effective in determining the upper bounds on the spectral radius of a graph (see\cite{Frankl2}). The original definition of the shifting technique goes back to the paper of Erd\H{o}s, Ko and Rado \cite{Erdos}. For a graph $G$ on vertex set $[n]$ and any two vertices $x, y\in[n]$ with $x\ne y$, we define the \emph{$(x,y)$-shift} of $G$ as ${S_{xy}}(G)=\{{{S_{xy}}(e)|~e \in E(G)}\},$ where
\[S_{xy}(e)=
\begin{cases}(e\setminus\{y\})\cup\{x\},&\text{if }y\in e,x\notin e\text{ and } (e\setminus\{y\})\cup\{x\}\notin E(G); \\
e,&\text{otherwise}.
\end{cases}\]

One can verify that the $\left( {x,y} \right)$-shift ${S_{xy}}(G)$ only attempts to modify the edges of $G$ containing no $x$ but $y$. For the $\left( {x,y} \right)$-shift ${S_{xy}}(G)$, we call $y$ the source and $x$ the target of the shift. In the remaining part of this paper, we also use the notation ${S_{xy}}(G)$ to denote the resulting graph obtained from a given graph $G$ by performing the $(x, y)$-shift ${S_{xy}}(G)$. Clearly, we can conclude that $|E(S_{xy}(G))|=|E(G)|$ by the definition of ${S_{xy}}(G)$. The resulting graph $S(G)$ obtained from a given graph $G$ by performing $(x,y)$-shifts for all vertex pairs $(x, y)$ with $x<y$ is called the \emph{shifted graph} of $G$. A graph $G$ is said to be \emph{shifted} if $S(G)=G$. In 2016, Li and Ning \cite{Li-Ning} proved that the shifting technique does not decrease the signless Laplacian spectral radius of a graph $G$, as follows.

\begin{lemma}[Li and Ning \cite{Li-Ning}]\label{lemma1b}
Let $x$, $y$ be two vertices of $G$ with $x\ne y$. Then $q({S_{xy}(G)})\ge q(G)$.
\end{lemma}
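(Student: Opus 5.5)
The plan is to compare Rayleigh quotients using a Perron vector of $Q(G)$, rearranged if necessary. For a unit vector $f\in\mathbb{R}^{V(G)}$ we have
\[
f^{T}Q(G)f=\sum_{uv\in E(G)}(f_u+f_v)^2 .
\]
Let $f$ be a nonnegative unit eigenvector of $Q(G)$ for $q(G)$; it exists by Perron--Frobenius, since $Q(G)$ is entrywise nonnegative. Then $q(G)=\sum_{uv\in E(G)}(f_u+f_v)^2$. It therefore suffices to exhibit a unit vector $g$ with $g^{T}Q(S_{xy}(G))g\ge q(G)$, because $q(S_{xy}(G))=\max_{\|g\|=1}g^{T}Q(S_{xy}(G))g$.

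First I record exactly which edges change. Write $N(\cdot)$ for neighbourhoods in $G$. The only edges moved by the shift are $yz$ with $z\in N(y)\setminus(N(x)\cup\{x\})$; each becomes $xz$, and since $xz\notin E(G)$ the result is still a simple graph. Every other edge is unchanged: edges avoiding $y$, the edge $xy$, and edges $yz$ with $z\in N(x)\cap N(y)$. I then split the edge sum into the following groups:
\begin{itemize}
\item edges meeting neither $x$ nor $y$;
\item the edge $xy$, if present;
\item the pairs $xz,yz$ with $z\in N(x)\cap N(y)$;
\item the edges $xz$ with $z\in N(x)\setminus(N(y)\cup\{y\})$;
\item the moved edges $yz$ with $z\in N(y)\setminus(N(x)\cup\{x\})$.
\end{itemize}

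\emph{Case $f_x\ge f_y$.} Take $g=f$. Each moved edge contributes $(f_x+f_z)^2\ge(f_y+f_z)^2$ in $S_{xy}(G)$, because $f_x\ge f_y\ge 0$ and $f_z\ge 0$. All other terms are identical, so $g^{T}Q(S_{xy}(G))g\ge q(G)$.

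\emph{Case $f_x<f_y$.} Let $g$ be $f$ with the coordinates at $x$ and $y$ interchanged; it is still a nonnegative unit vector. I compare group by group.
\begin{itemize}
\item Edges avoiding $x,y$ contribute the same amount.
\item The edge $xy$ and each pair $\{xz,yz\}$ with $z$ a common neighbour contribute $(f_x+f_y)^2$ and $(f_x+f_z)^2+(f_y+f_z)^2$ respectively. These expressions are symmetric in $f_x,f_y$, so they are unchanged.
\item A moved edge $yz$ becomes $xz$ and contributes $(g_x+g_z)^2=(f_y+f_z)^2$, exactly its original contribution.
\item An edge $xz$ with $z\in N(x)\setminus(N(y)\cup\{y\})$ stays and contributes $(g_x+f_z)^2=(f_y+f_z)^2\ge(f_x+f_z)^2$.
\end{itemize}
Summing over the groups gives $g^{T}Q(S_{xy}(G))g\ge q(G)$.

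In both cases $q(S_{xy}(G))\ge q(G)$. The only delicate point is the bookkeeping in the second case. There the naive choice $g=f$ would decrease the sum, and one must check that swapping the two coordinates exactly compensates for the moved edges while the remaining edges at $x$ only gain. This hinges on the shift never creating a multiple edge, since moved edges go only to $z\notin N(x)$, and on every vertex other than $x,y$ being fixed. No bipartiteness is needed.
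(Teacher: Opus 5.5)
Your proof is correct and complete. The edge bookkeeping is right: the shift is a bijection $E(G)\to E(S_{xy}(G))$ because moved edges land only on non-edges $xz$. In each case your unit vector has Rayleigh quotient at least $q(G)$ for $Q(S_{xy}(G))$.

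The paper gives no proof of this lemma and only cites Li and Ning. Your argument is the standard Perron-vector comparison used for this fact. The coordinate swap in the case $f_x<f_y$ makes explicit the symmetry $S_{xy}(G)\cong S_{yx}(G)$, via the transposition of $x$ and $y$. That symmetry is usually invoked to assume $f_x\ge f_y$, and the paper itself appeals to it in the proof of Lemma~\ref{lemma-equality-shift}.
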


In 2025, Zhang et al. \cite{X.Zhang} proved that the above inequality is strict provided that the graph $G$ is connected and that $G$ and ${S_{xy}}(G)$ are not isomorphic, as follows.

\begin{lemma}[Zhang et al. \cite{X.Zhang}]\label{lemma2b}
Let $G$ be a connected graph on vertex set $[n]$, and let $x$, $y$ be two vertices of $G$ with $x\ne y$. Then $q({S_{xy}(G)})>q(G)$ unless $G\cong{S_{xy}}(G)$.
\end{lemma}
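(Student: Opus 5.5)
The plan is a Perron-vector and Rayleigh-quotient argument. First I reduce to the case where the target $x$ carries at least as much Perron weight as the source $y$, using the fact that $S_{xy}(G)\cong S_{yx}(G)$. Then I compare quadratic forms. Finally I show that equality would force a contradiction in the eigen-equation at $y$.

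\textbf{Step 1 (symmetry).} Write $N(\cdot)$ for neighbourhoods in $G$. In $S_{xy}(G)$ the vertex $x$ is adjacent to $(N(x)\cup N(y))\setminus\{x,y\}$, together with $y$ if $xy\in E(G)$. The vertex $y$ keeps exactly $(N(x)\cap N(y))\setminus\{x,y\}$, again together with $x$ if $xy\in E(G)$. Edges not meeting $\{x,y\}$ are untouched. The description of $S_{yx}(G)$ is the same with the roles of $x$ and $y$ exchanged. Hence the transposition $(x\,y)$ is an isomorphism $S_{xy}(G)\cong S_{yx}(G)$, so the two shifts have the same $q$. It therefore suffices to prove the claim for whichever of the two shifts has as its target the vertex of larger Perron weight.

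\textbf{Step 2 (Rayleigh comparison).} Since $G$ is connected, $Q(G)$ is irreducible and nonnegative. So there is a unit eigenvector $\mathbf z>0$ with $Q(G)\mathbf z=q(G)\mathbf z$, and by Step 1 we may assume $z_x\ge z_y$. Let $W=N(y)\setminus(N(x)\cup\{x\})$ be the set of neighbours $w$ whose edges $yw$ are moved to $xw$, and put $r=|W|$. If $r=0$, then $S_{xy}(G)=G$ and the exceptional case holds. Otherwise, using
\[
\mathbf z^{T}Q(H)\mathbf z=\sum_{uv\in E(H)}(z_u+z_v)^2
\]
for any graph $H$ on this vertex set, we get
\[
q(S_{xy}(G))\ge \mathbf z^{T}Q(S_{xy}(G))\mathbf z=q(G)+\sum_{w\in W}\big[(z_x+z_w)^2-(z_y+z_w)^2\big]\ge q(G).
\]

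\textbf{Step 3 (strictness).} Suppose equality holds throughout. Then $\mathbf z$ attains the Rayleigh maximum for the symmetric matrix $Q(S_{xy}(G))$, so it is an eigenvector of $Q(S_{xy}(G))$ with eigenvalue $q(G)$. Compare the $y$-th coordinates of $Q(G)\mathbf z$ and $Q(S_{xy}(G))\mathbf z$. Passing to the shift, the degree of $y$ drops by $r$ and the neighbours in $W$ are lost. Therefore
\[
\big(Q(G)\mathbf z\big)_y-\big(Q(S_{xy}(G))\mathbf z\big)_y=r z_y+\sum_{w\in W}z_w>0.
\]
This contradicts the fact that both coordinates equal $q(G)z_y$. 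Hence the inequality is strict whenever $r\ge1$. In particular it is strict whenever $S_{xy}(G)\ne G$, and a fortiori whenever $G\not\cong S_{xy}(G)$.

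\textbf{Main obstacle.} The delicate point is Step 1. Without the isomorphism $S_{xy}(G)\cong S_{yx}(G)$, the sign of $z_x-z_y$ is uncontrolled and the quadratic-form comparison fails. So I would check the neighbourhood description of the shift carefully, including the edge $xy$ if present. Once that holds, the remaining steps are routine Perron--Frobenius reasoning, with connectivity used only to guarantee $\mathbf z>0$.
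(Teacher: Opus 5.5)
Your proof is correct. The paper does not prove this lemma; it quotes it from Zhang et al., so there is no in-paper argument to compare against step by step.

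Your approach is the standard Perron-vector/Rayleigh-quotient comparison. The paper uses the same device inside its proof of Lemma~\ref{lemma-equality-shift}, where it compares $z^{T}Q(F)z=\sum_{uv\in E(F)}(z_u+z_v)^2$ before and after a joining shift. Your Step~3 is what upgrades the Li--Ning inequality (Lemma~\ref{lemma1b}) to the strict version: equality of Rayleigh quotients forces an eigenvector equation, and you then read off the $y$-coordinate.

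One point of wording should be made explicit. The intermediate claim ``strict whenever $S_{xy}(G)\neq G$'' holds only under your normalization $z_x\ge z_y$. For the opposite orientation it is false: if $N(x)\setminus\{y\}\subsetneq N(y)\setminus\{x\}$, then $S_{xy}(G)\neq G$, yet $S_{xy}(G)\cong S_{yx}(G)=G$. This is exactly why the lemma states its exception up to isomorphism.

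Your Step~1 reduction handles this correctly. Both $q(S_{xy}(G))$ and the condition $G\cong S_{xy}(G)$ are invariant under swapping $x$ and $y$, so the ``a fortiori'' transfers back to the original labelling. In fact your argument gives a sharper statement: $q(S_{xy}(G))=q(G)$, equivalently $G\cong S_{xy}(G)$, holds exactly when $N(x)\setminus\{y\}$ and $N(y)\setminus\{x\}$ are nested.
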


For a bipartite graph on the vertex bipartition $(X,Y)$, the two vertices of an $(x,y)$-shift are always chosen from the same part so that the resulting graph remains bipartite. Since each $(x, y)$-shift strictly reduces the sum of the vertex elements of edges of $G$, the following Lemma \ref{lemma3b} is immediate by the definition of the shifted graph.

\begin{lemma}\label{lemma3b}
Let $G$ be a shifted graph. Then $\{{{y_1},{y_2}}\}\in E(G)$ implies $\{ {{x_1},{x_2}}\} \in E(G)$ for each $\{{{x_1},{x_2}}\},\{{{y_1},{y_2}}\} \subseteq [n]$ such that ${x_i}\le {y_i}$ for $i \in \{1,2\}$.
\end{lemma}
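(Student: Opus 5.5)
The plan is a two-step induction along the coordinates, using the fact that a shifted graph is fixed by every individual admissible shift. For a bipartite graph, the statement is read with $x_1,x_2$ required to lie in the same parts as $y_1,y_2$, so that every shift used is admissible.

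\textbf{Step 1: $S(G)=G$ forces $S_{xy}(G)=G$ for every pair $x<y$.} Let $\sigma(G)=\sum_{e\in E(G)}\sum_{v\in e}v$. By definition, $S_{xy}$ with $x<y$ either leaves an edge unchanged or replaces $y$ by the smaller label $x$. Since $|E(S_{xy}(G))|=|E(G)|$, each shift satisfies $\sigma(S_{xy}(G))\le\sigma(G)$, with equality if and only if $S_{xy}(G)=G$. The graph $S(G)$ arises from $G$ by a sequence of such shifts, so $\sigma$ never increases along the sequence. Hence $S(G)=G$ forces every shift in the sequence to be trivial. In particular the first shift performed, and by induction each subsequent one, acts on $G$ itself. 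So $S_{xy}(G)=G$ for every admissible pair $x<y$.

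\textbf{Step 2: the basic move.} Suppose $\{y_1,y_2\}\in E(G)$ and $x<y_1$ with $x\ne y_2$. Then the edge $e=\{y_1,y_2\}$ contains $y_1$ but not $x$. If $\{x,y_2\}\notin E(G)$, then $S_{xy_1}$ would move $e$, contradicting $S_{xy_1}(G)=G$. Hence $\{x,y_2\}\in E(G)$. The same argument applies to the second coordinate.

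\textbf{Step 3: two moves, handling coincidences.} Now take $x_i\le y_i$ with $x_1\ne x_2$. The generic case is:
\[
\{y_1,y_2\}\to\{x_1,y_2\}\to\{x_1,x_2\}.
\]
If $x_1=y_1$ we skip the first move, and if $x_2=y_2$ we skip the second. The first move needs $x_1\ne y_2$, and the second needs $x_2\ne x_1$, which holds by assumption.

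When $x_1=y_2$ (possible only outside the bipartite setting), we instead move the second coordinate first:
\[
\{y_1,y_2\}\to\{y_1,x_2\}\to\{x_1,x_2\}.
\]
Here $x_2\le y_2=x_1\le y_1$. If $x_2=y_1$, all four labels coincide, so $\{x_1,x_2\}=\{y_1,y_2\}$ and there is nothing to prove. Otherwise $x_2<y_1$, and the first move is the shift $S_{x_2y_2}$ applied to the edge $\{y_1,y_2\}$; it is legitimate because $x_2\neq y_1$. Then $\{y_1,x_2\}\in E(G)$. This edge already equals $\{x_1,x_2\}$ in the subcase where $y_1=x_1$. Otherwise we apply Step 2 with $x_1<y_1$ and $x_1\ne x_2$, which gives $\{x_1,x_2\}\in E(G)$.

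The only real obstacle is Step 1, namely justifying that "shifted" means invariance under every single shift rather than under the composite operation. The monotonicity of $\sigma$ is exactly what resolves this.
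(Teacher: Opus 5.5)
Your proof is correct and takes essentially the paper's route. The paper likewise rests on the fact that every nontrivial shift strictly decreases the sum of vertex labels over the edges, and then calls the lemma immediate from the definition of a shift; your Steps 1--3 write that out (a shifted graph is fixed by each single admissible shift, followed by two coordinatewise moves), and the degenerate subcases $x_2=y_1$ and $y_1=x_1$ in your $x_1=y_2$ branch are actually vacuous, since they would force $x_1=x_2$ or $y_1=y_2$, but handling them does no harm.
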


For a given collection ${\cal G}=\{{G_1}, \ldots, {G_m}\}$ and two vertices $x,y\in[n]$ with $x \ne y$, set ${S_{xy}}({\cal G})=\{{S_{xy}}({{G_1}}),\ldots,$ ${S_{xy}}({G_m})\}$ and $S({\cal G})=\{S({G_1}),\ldots ,S({G_m})\} $. In 2025, Huang et al. \cite{Huang} proved that the shifting technique preserves the existence of rainbow matchings in the following sense.

\begin{lemma}[Huang et al. \cite{Huang}]\label{lemma4b}
Let ${\cal G}=\{{G_1}, \ldots ,{G_m}\} $ be a collection of graphs on the same vertex set $[n]$. If $S({\cal G})$ admits a rainbow matching, then so does ${\cal G}$.
\end{lemma}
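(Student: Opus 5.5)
The plan is to reduce to a single shift and then do a local exchange argument. The shifted collection $S({\cal G})$ is obtained from ${\cal G}$ by a finite sequence of $(x,y)$-shifts, each applied simultaneously to every member. So it suffices to prove the following one-step statement: if $S_{xy}({\cal G})$ admits a rainbow matching of size $r$, then so does ${\cal G}$. The lemma then follows by induction along the sequence of shifts, read backwards.

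First I record two facts about an edge $e\in E(S_{xy}(G))$, both read off from the definition of $S_{xy}$.
\begin{enumerate}
\item[(i)] If $e\notin E(G)$, then $x\in e$, $y\notin e$, and $f=(e\setminus\{x\})\cup\{y\}\in E(G)$. Such an edge can only have been created by moving $y$ to $x$.
\item[(ii)] If $y\in e$ and $x\notin e$, then $e\in E(G)$ and also $(e\setminus\{y\})\cup\{x\}\in E(G)$. Otherwise $e$ would have been shifted and would not survive.
\end{enumerate}

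Now let $M=\{e_1,\ldots,e_r\}$ with $e_j\in E(S_{xy}(G_{i_j}))$ and distinct indices $i_j$. Call $e_j$ \emph{new} if $e_j\notin E(G_{i_j})$. By (i), every new edge contains $x$. Since $M$ is a matching, at most one edge is new. If no edge is new, $M$ is already a rainbow matching of ${\cal G}$.

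So assume $e_1=\{x,u\}$ is new. By (i), $f=\{y,u\}\in E(G_{i_1})$.
\begin{itemize}
\item If $y$ is not covered by $M$, replace $e_1$ by $f$. The result is a rainbow matching of ${\cal G}$.
\item Otherwise $y\in e_2=\{y,w\}$ for some $e_2\in M$, with $w\ne u$, and $x\notin e_2$ because $e_1$ already covers $x$. By (ii), both $\{y,w\}$ and $\{x,w\}$ lie in $E(G_{i_2})$. Replace $e_1,e_2$ by $\{y,u\}\in E(G_{i_1})$ and $\{x,w\}\in E(G_{i_2})$. These two edges are disjoint and avoid the vertices of $e_3,\ldots,e_r$. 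All of $e_3,\ldots,e_r$ are original edges. So this gives a rainbow matching of ${\cal G}$ of the same size, using the same index set.
\end{itemize}

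In the bipartite setting, $x$ and $y$ lie in the same part, so the exchanged edges remain edges across $(X,Y)$.

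The main point needing care is fact (ii): that an edge containing $y$ but not $x$ survives in $S_{xy}(G)$ exactly when its $x$-counterpart is already present. The whole exchange in the second case rests on this, and it should follow directly from the case split in the definition of $S_{xy}(e)$. The remaining steps are routine bookkeeping.
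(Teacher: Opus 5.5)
Your proof is correct. It follows essentially the standard argument behind this lemma, which the paper cites from Huang et al.\ rather than proving. You reduce to a single simultaneous shift $S_{xy}$, note that at most one edge of the rainbow matching is new (and that edge contains $x$), and either move it back to $y$ or, if $y$ is covered by some $\{y,w\}$, use the fact that this edge survived the shift to get $\{x,w\}$ in the same member and swap. Your implicit assumption that $S({\cal G})$ comes from applying the same sequence of shifts to every member is exactly how the paper uses shifting.
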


A component is \emph{nontrivial} if it contains at least one edge. We next establish an equality characterization for shifting.

\begin{lemma}\label{lemma-equality-shift}
Let $G^*$ be a graph obtained from a graph $G$ by a finite sequence of shifts. If $G^*$ has exactly one nontrivial component and $q(G)=q(G^*)>0$, then $G\cong G^*$.
\end{lemma}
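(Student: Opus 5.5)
The plan is a backward induction along the shifting sequence, using Lemma~\ref{lemma1b} to force all intermediate spectral radii to be equal and Lemma~\ref{lemma2b} plus a Perron--Frobenius argument to force each shift step to be an isomorphism. Write the sequence as $G=G_0,G_1,\ldots,G_r=G^*$, where $G_{j+1}=S_{x_jy_j}(G_j)$. By Lemma~\ref{lemma1b},
\[
q(G_0)\le q(G_1)\le\cdots\le q(G_r).
\]
Since $q(G)=q(G^*)$, all these values equal a common number $q>0$. Every shift preserves the number of edges. I will prove by backward induction on $j$ that $G_j\cong G^*$ for every $j$. The case $j=r$ is trivial. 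So assume $G_{j+1}\cong G^*$, so that $G_{j+1}$ has exactly one nontrivial component, and write $x=x_j$, $y=y_j$. Recall that $q$ of a graph is the maximum of $q$ over its components, and that isolated vertices contribute $0$.

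\emph{Case 1: $G_j$ has at least two nontrivial components.} A component of $G_j$ containing neither $x$ nor $y$ is untouched by $S_{xy}$ and remains a component of $G_{j+1}$. Hence, since $G_{j+1}$ has only one nontrivial component, $G_j$ has exactly two nontrivial components: $C_x\ni x$ and $C_y\ni y$. Both survive in $G_{j+1}$ in the following sense.
\begin{itemize}
\item Every edge of $C_x$ is kept by the shift, because it either contains $x$ or avoids $y$.
\item The image of $C_y$ is $C_y$ with $y$ renamed $x$, because no edge $xz$ with $z\in C_y$ exists in $G_j$.
\end{itemize}
So the unique nontrivial component $D$ of $G_{j+1}$ contains a copy of $C_x$ and a copy of $C_y$. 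Each copy is a proper subgraph of $D$, since $D$ has $|E(C_x)|+|E(C_y)|$ edges and both counts are positive. By Perron--Frobenius, $q$ is strictly monotone under taking proper subgraphs of a connected graph. Therefore
\[
q(G_{j+1})=q(D)>\max\{q(C_x),q(C_y)\}=q(G_j),
\]
which contradicts the equality of all the $q(G_i)$. So this case cannot occur.

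\emph{Case 2: $G_j$ has exactly one nontrivial component $C$.} It has one because $q(G_j)>0$. If $y\notin C$, the shift does nothing, so $G_{j+1}=G_j$. If $y\in C$ and $x\notin C$, then $x$ is isolated, and the shift simply relabels $y$ as $x$ on all edges through $y$. The result is isomorphic to $G_j$ unless $y$ is a cut vertex whose removal splits $C$. In that subcase only the edges at $y$ move, and $G_{j+1}$ consists of $C$ with $y$ split into two vertices. That is a disconnected edge-disjoint rearrangement contradicting uniqueness of the nontrivial component, unless nothing remains at $y$, in which case the graphs are isomorphic. If $x,y\in C$, apply Lemma~\ref{lemma2b} to the connected graph $C$, which is legitimate because shifts within $C$ do not interact with the isolated vertices. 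This gives $q(S_{xy}(C))>q(C)$ unless $S_{xy}(C)\cong C$, and equality of spectral radii forces the isomorphism. In all subcases $G_j\cong G_{j+1}\cong G^*$, which completes the induction.

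I expect the main obstacle to be the bookkeeping in Case 2 when $x$ lies outside $C$. There one must verify that the shifted graph is isomorphic to $G_j$ or else violates the one-nontrivial-component hypothesis, since Lemma~\ref{lemma2b} is stated only for connected graphs on the whole vertex set. The cleanest fix I would try first is to replace $G_j$ by the connected graph induced on $V(C)\cup\{x\}$ with the isolated vertices added back. Another option is to note directly that moving all edges at $y$ to an isolated vertex $x$ is a graph isomorphism.
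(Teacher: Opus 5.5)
Your proof is correct and has the same skeleton as the paper's. Lemma~\ref{lemma1b} forces equality along the chain, two nontrivial components are ruled out, and then shifts touching an isolated vertex are trivial or relabellings while shifts inside the unique nontrivial component are isomorphisms by Lemma~\ref{lemma2b}.

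The real difference is how you exclude two nontrivial components. The paper works forward. It fixes a component of maximum $q$ in some intermediate graph and follows it through later shifts, using Lemma~\ref{lemma2b} to keep its isomorphism type. At the first shift that merges it with another nontrivial component, it evaluates the new quadratic form at the zero-extended Perron vector of that component, gaining $d(y)z_x^2>0$. Your backward induction puts you directly at the merging step. Since $G_{j+1}$ already has a single nontrivial component $D$, and you identify $D$ as $C_x$ glued at $x$ to a relabelled copy of $C_y$, Lemma~\ref{lemma6b} gives $q(D)>\max\{q(C_x),q(C_y)\}=q(G_j)$ at once. This avoids tracking a component through intermediate shifts and is shorter. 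The paper's quadratic-form step is essentially a hand-made instance of the strict monotonicity you invoke.

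Two points in your write-up need repair.

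\textbf{The subcase $y\in C$, $x\notin C$.} The cut-vertex discussion is wrong: there is no exception, and $y$ is never ``split''. Since $x$ is isolated, every edge $yz$ satisfies $x\notin yz$ and $xz\notin E(G_j)$. So every edge at $y$ moves, and $G_{j+1}$ is exactly the image of $G_j$ under the transposition of $x$ and $y$, whether or not $y$ is a cut vertex. Your first clause already says this, as does your closing remark. Delete the rest; as written it also handles a vacuous case with reasoning that would not hold in general.

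\textbf{The ``Hence'' in Case~1.} It skips a line. Note that every nontrivial component of $G_j$ meeting $\{x,y\}$ has its edges sent to a nonempty edge set. That set lies within the union of the components of $x$ and $y$, which is disjoint from every untouched component. So an untouched nontrivial component cannot coexist with a touched one in $G_{j+1}$. If no nontrivial component meets $\{x,y\}$, all of them survive unchanged. This excludes the configurations where $x$ and $y$ lie in one component, or where $x$ or $y$ is isolated. Only then do you get that the two nontrivial components of $G_j$ are distinct and contain $x$ and $y$ respectively.

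With these fixes the proof is complete.
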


\begin{proof}
Let $G=F_0,F_1,\ldots,F_\ell=G^*$ be the shift sequence. By Lemma~\ref{lemma1b}, we deduce that $q(F_0)\le q(F_1)\le\cdots\le q(F_\ell)$. Since \(q(F_0)=q(F_\ell)\), we have $q(F_0)=q(F_1)=\cdots=q(F_\ell)$.

Suppose that some \(F_j\) has at least two nontrivial components. Since the signless Laplacian matrix of a graph is block diagonal with respect to its connected components, we have \(q(F_j)=\max\{q(C): C~\text{is a component of} F_j\}\). Choose a component \(C_j\) of \(F_j\) such that \(q(C_j)=q(F_j)\). We track the image of the component $C_j$ through the subsequent
shifts. At each step, we denote by $C_i$ the component obtained from $C_j$ after applying the shifts up to $F_i$, provided that no shift has merged it with another nontrivial component.

Before the first shift that joins the tracked component with another nontrivial component, a shift either does not affect the tracked component, involves it and an isolated vertex, or is performed within it. The first two cases preserve its
isomorphism type, because a shift involving an isolated vertex only changes the label of a vertex of the component. In the last case, let $C'$ be the component obtained after the shift. By Lemma~\ref{lemma2b}, if $C'\not\cong C_j$, then
$q(C')>q(C_j)$. Since $q(C_j)=q(F_i)$ and $C'$ is a component of $F_{i+1}$, we have $q(F_{i+1})\ge q(C')>q(F_i)$, a contradiction. Hence, the tracked component remains isomorphic before the first joining shift.

Let $F_i$ be the graph immediately before this first joining shift, and let $C_i$ be the tracked component in $F_i$. Let \(x\in V(C)\) be the target vertex, and let \(y\) be the source vertex in another nontrivial component. Consider the shift \(S_{xy}\). The two possible orientations of this shift are isomorphic, and hence have the same signless Laplacian spectral radius.

Let \(z\) be a unit Perron vector of \(Q(C)\). We extend \(z\) by assigning zero entries to all vertices outside the vertex set of $C$. Since \(C\) is connected and nontrivial, \(z_v>0\) for every \(v\in V(C)\); in particular, \(z_x>0\).

One can check that every neighbor of \(y\) lies outside \(C\) and is nonadjacent to \(x\). Since the entries of $z$ outside $C$ are zero, each edge moved from $yu$ to $xu$ increases the quadratic form by $(z_x+z_u)^2=z_x^2$. Hence, by
$z^{T}Q(F)z=\sum_{uv\in E(F)}(z_u+z_v)^2$, we obtain $z^{T}Q(S_{xy}(F))z=z^{T}Q(F)z+d_F(y)z_x^2$. Since $d_F(y)\ge1$ and $z_x>0$, we have $z^{T}Q(S_{xy}(F))z>z^{T}Q(F)z=q(F)$, and hence $q(S_{xy}(F))>q(F)$, a contradiction. Thus, every graph in the shift sequence has exactly one nontrivial component.

We now show that every shift in the sequence preserves the isomorphism type. A shift involving an isolated vertex only relabels the component, while a shift inside the component preserves its isomorphism type by Lemma~\ref{lemma2b} and the equality of the spectral radii. A shift between two isolated vertices has no effect. Therefore, \(F_0\cong F_1\cong\cdots\cong F_\ell\), and hence
\(G\cong G^*\). \qed
\end{proof}

Next, we recall the equitable partition of a real symmetric matrix, which will be used to calculate signless Laplacian spectral radii. Let $M$ be a real symmetric matrix whose rows and columns are indexed by a set $V$, and let $\pi=\{V_1,\ldots,V_r\}$ be a partition of $V$. Write $M_{ij}$ for the submatrix with rows in $V_i$ and columns in $V_j$. The partition $\pi$ is \emph{equitable} if every row of $M_{ij}$ has the same sum, say $b_{ij}$, for each $i,j$. The matrix $(b_{ij})_{r\times r}$ is the \emph{quotient matrix} of $M$ with respect to $\pi$, denoted by $M/\pi$.

The following lemma indicates that the equitable partition can be used to simplify the process of calculating the signless Laplacian spectral radius of a graph.

\begin{lemma}[Godsil and Royle \cite{Godsil}]\label{lemma5b}
Let $G$ be a graph. If $\pi$ is an equitable partition of $V(G)$ corresponding to $Q(G)$, then $\lambda_{\max}(Q(G)/\pi)=q(G)$.
\end{lemma}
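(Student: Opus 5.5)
The plan is to pass between $Q=Q(G)$ and its quotient matrix through the characteristic matrix of the partition. Let $\pi=\{V_1,\ldots,V_r\}$ and $B=Q(G)/\pi=(b_{ij})$. Let $P$ be the $n\times r$ matrix whose $j$-th column is the indicator vector of $V_j$. Equitability says precisely that the row sums of each block $Q_{ij}$ are constant, equal to $b_{ij}$, which is the matrix identity
\[
QP=PB .
\]
Put $D=P^{T}P=\mathrm{diag}(|V_1|,\ldots,|V_r|)$, which is invertible. The proof then splits into two inequalities.

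\emph{Step 1: $\lambda_{\max}(B)\le q(G)$.} Multiplying $QP=PB$ on the left by $P^{T}$ gives $P^{T}QP=DB$. Hence $B$ is similar to the symmetric matrix
\[
\widetilde B=D^{1/2}BD^{-1/2}=D^{-1/2}P^{T}QPD^{-1/2},
\]
so $B$ has real eigenvalues. The columns of $U=PD^{-1/2}$ are orthonormal. For any unit vector $w$, the vector $Uw$ is therefore also a unit vector, and the Rayleigh quotient gives $w^{T}\widetilde Bw=(Uw)^{T}Q(Uw)\le q(G)$. Thus $\lambda_{\max}(B)=\lambda_{\max}(\widetilde B)\le q(G)$.

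\emph{Step 2: $q(G)$ is an eigenvalue of $B$.} This is the step that needs care, because $G$ need not be connected, so $Q$ need not be irreducible. The device that avoids connectivity is invariance:
\begin{itemize}
\item $QP=PB$ shows that the column space $W$ of $P$ is $Q$-invariant.
\item Since $Q$ is symmetric, $Q$ commutes with the orthogonal projection $\Pi=PD^{-1}P^{T}$ onto $W$.
\item Take a nonnegative unit eigenvector $z$ of $Q$ for $q(G)$; one exists by Perron--Frobenius for nonnegative symmetric matrices.
\item $\Pi z$ replaces each entry of $z$ by the average of $z$ over its block $V_j$. So $\Pi z\ge 0$, and $\Pi z\neq 0$ because some block contains a positive entry of $z$.
\item Then $Q(\Pi z)=\Pi Qz=q(G)\,\Pi z$.
\item Writing $\Pi z=Pv$ with $v=D^{-1}P^{T}z\neq 0$, we get $P(Bv)=QPv=q(G)Pv$. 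Since $P$ has full column rank, $Bv=q(G)v$.
\end{itemize}

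Combining the two steps gives $\lambda_{\max}(Q(G)/\pi)=q(G)$.
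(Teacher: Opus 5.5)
Your proof is correct. The identity $QP=PB$ is exactly equitability. $\widetilde B$ is symmetric because $DB=P^{T}QP$ is. The commutation $Q\Pi=\Pi Q$ follows from $Q$-invariance of the column space of $P$ together with symmetry of $Q$. And the block averages of a nonzero nonnegative vector are nonzero.

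The paper gives no proof of this lemma; it is quoted from Godsil and Royle. So the relevant comparison is with the argument usually given for it. That argument also starts from $QP=PB$, but uses it to note that the characteristic polynomial of $B$ divides that of $Q$. This yields your Step~1 at once, including reality of the spectrum of $B$. It then goes upward: a nonnegative Perron vector $v$ of $B$ lifts to a nonnegative eigenvector $Pv$ of $Q$, and irreducibility of $Q$ forces the corresponding eigenvalue to be $q(G)$. That final step needs $G$ connected, while the lemma as quoted is stated for an arbitrary graph.

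Your Step~2 goes downward instead, pushing a nonnegative Perron vector of $Q$ onto the quotient. It never uses irreducibility, so it covers the lemma in the stated generality. In the paper's two applications, $K_{s,t}$ in Lemma~\ref{lemma7b} and $H_r$ in Claim~\ref{claim1a}, the graph is connected, so either route would do there.

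You can also shorten Step~2. Transposing $QP=PB$ gives $P^{T}Q=B^{T}P^{T}$, hence $B^{T}(P^{T}z)=q(G)\,P^{T}z$ with $P^{T}z\ge 0$ and $P^{T}z\neq 0$. So $q(G)$ is an eigenvalue of $B^{T}$, hence of $B$, without needing the projection $\Pi$ or the commutation argument.
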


We also use the following monotonicity result of Brouwer and Haemers \cite{Brouwer}.

\begin{lemma}[Brouwer and Haemers \cite{Brouwer}]\label{lemma6b}
Let $H$ be a subgraph of a connected graph $G$. Then $q(H)\le q(G)$ and the equality holds if and only if $H \cong G$.
\end{lemma}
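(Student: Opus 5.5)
We would prove this directly from the Rayleigh quotient and the Perron--Frobenius theorem, using the identity $z^{T}Q(F)z=\sum_{uv\in E(F)}(z_u+z_v)^2$ that already appeared in the proof of Lemma~\ref{lemma-equality-shift}. Regard $H$ as a spanning subgraph of $G$ by adding the vertices of $V(G)\setminus V(H)$ as isolated vertices. This changes neither $q(H)$ nor the quadratic form, since the added rows and columns of $Q(H)$ are zero. Then $E(H)\subseteq E(G)$ and $V(H)=V(G)$.

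\emph{Inequality.} Let $x$ be a unit eigenvector of $Q(H)$ for $q(H)$. Since $Q(H)$ is a nonnegative symmetric matrix, we may take $x\ge 0$ entrywise. Then
\[
q(H)=x^{T}Q(H)x=\sum_{uv\in E(H)}(x_u+x_v)^2\le\sum_{uv\in E(G)}(x_u+x_v)^2=x^{T}Q(G)x\le q(G).
\]
The middle inequality holds because every term is nonnegative and $E(H)\subseteq E(G)$. The last inequality is the Rayleigh principle for the symmetric matrix $Q(G)$.

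\emph{Equality.} Suppose $q(H)=q(G)$. Then both inequalities above are equalities. Equality in the Rayleigh principle forces $x$ to be an eigenvector of $Q(G)$ for $q(G)$. Since $G$ is connected, $Q(G)$ is nonnegative and irreducible. By Perron--Frobenius, $q(G)$ is a simple eigenvalue whose eigenvectors are multiples of a strictly positive vector. Hence $x_v>0$ for every $v\in V(G)$.

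Now every edge $uv\in E(G)\setminus E(H)$ contributes $(x_u+x_v)^2>0$. So equality in the middle inequality forces $E(H)=E(G)$. It remains to recover the vertex set. If $G$ has at least one edge, then every vertex of the connected graph $G$ is incident with an edge. Therefore every vertex of $G$ is already a vertex of the original $H$, so no vertex was added and $H=G$. The degenerate case $G\cong K_1$ is trivial: then $H=G$. Conversely, $H\cong G$ obviously gives $q(H)=q(G)$.

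The main point requiring care is the positivity of $x$ in the equality case. It is exactly what turns "some edge is missing" into a strict increase, and it is where connectivity of $G$ is essential: it gives irreducibility of $Q(G)$, hence a simple Perron root with a positive eigenvector. Without connectivity the equality claim can fail, for example when $H$ is the component of $G$ attaining $q(G)$. The only other subtlety is bookkeeping for the case $V(H)\subsetneq V(G)$, which is handled by the padding argument and the observation that a connected graph with an edge has no isolated vertices.
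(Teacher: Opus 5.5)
Your proof is correct and complete. The paper gives no proof of this lemma; it imports it from Brouwer and Haemers, where it follows from the standard Perron--Frobenius comparison: if $0\le A\le B$ entrywise, $B$ is irreducible and $A\ne B$, then $\rho(A)<\rho(B)$. This applies with $A$ equal to $Q(H)$ padded by zero rows and columns, which is entrywise dominated by $Q(G)$ because degrees and adjacencies can only increase.

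Your argument proves exactly this symmetric case directly. You use the Rayleigh quotient together with the edge-sum form $z^{T}Q(F)z=\sum_{uv\in E(F)}(z_u+z_v)^2$, the same device the paper uses in the proof of Lemma~\ref{lemma-equality-shift}. So your route is self-contained and fits the paper's own toolkit.

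Your write-up also gains two things over the bare citation:
\begin{itemize}
\item It makes explicit that connectivity enters only through the strict positivity of the Perron vector.
\item In the equality case it yields $H=G$ rather than merely $H\cong G$, which is exactly the form used in Lemma~\ref{lemma7b} for proper spanning subgraphs.
\end{itemize}
Your padding step, together with the observation that a connected graph with an edge has no isolated vertices, correctly bridges $E(H)=E(G)$ and $H=G$.
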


\begin{lemma}\label{lemma7b}
Let $r,s,t$ be positive integers, and let $H=K_{s,t}\cup\overline{K_r}$. Then $q(H)=s+t$, and $q(F)<s+t$ whenever $F$ is a proper spanning subgraph of $H$.
\end{lemma}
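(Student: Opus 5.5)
The plan is to compute $q(H)$ with an equitable partition, and then deduce the strict inequality for proper spanning subgraphs from Lemma~\ref{lemma6b} applied to the unique nontrivial component.

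\emph{Computing $q(H)$.} Write $H=K_{s,t}\cup\overline{K_r}$, where the parts of $K_{s,t}$ are $S$ and $T$ with $|S|=s$, $|T|=t$, and let $R$ be the set of $r$ isolated vertices. Take the partition $\pi=\{S,T,R\}$ of $V(H)$. Each vertex of $S$ has degree $t$, and its row of $Q(H)$ has entry $t$ on the diagonal and $t$ ones in the columns of $T$. Each vertex of $T$ similarly has diagonal entry $s$ and $s$ ones in the columns of $S$. The rows indexed by $R$ are zero. Hence $\pi$ is equitable with quotient matrix
\[
Q(H)/\pi=\begin{pmatrix} t & t & 0\\ s & s & 0\\ 0&0&0\end{pmatrix}.
\]
The upper-left $2\times 2$ block has trace $s+t$ and determinant $ts-ts=0$, so the eigenvalues of $Q(H)/\pi$ are $s+t$, $0$ and $0$. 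By Lemma~\ref{lemma5b}, $q(H)=s+t$.

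\emph{Proper spanning subgraphs.} Let $F$ be a proper spanning subgraph of $H$. Every edge of $F$ lies in $K_{s,t}$, so $F=F'\cup\overline{K_r}$, where $F'$ is a spanning subgraph of $K_{s,t}$ on $S\cup T$. Since $F$ is proper, $F'$ is a proper subgraph of $K_{s,t}$, which means $|E(F')|<st$. The matrix $Q(F)$ is block diagonal with blocks $Q(F')$ and the zero matrix on $R$. Therefore $q(F)=\max\{q(F'),0\}=q(F')$, since $q(F')\ge 0$ because $Q(F')$ is positive semidefinite.

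Next apply Lemma~\ref{lemma6b} with $G=K_{s,t}$. This graph is connected because $s,t\ge1$, and $F'$ is a subgraph of it, so $q(F')\le s+t$. Equality holds only if $F'\cong K_{s,t}$. But $F'$ has fewer edges than $K_{s,t}$, so it cannot be isomorphic to it. Hence $q(F)=q(F')<s+t$.

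\emph{Main point of care.} The only subtle step is the equality case of Lemma~\ref{lemma6b}. Isomorphism is ruled out by counting edges, which is why it matters that $F'$ is a \emph{spanning} proper subgraph of $K_{s,t}$ and not some other graph.
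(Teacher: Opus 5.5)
Your proof is correct and takes essentially the same route as the paper: an equitable-partition computation via Lemma~\ref{lemma5b}, followed by Lemma~\ref{lemma6b} applied to the spanning subgraph of the connected graph $K_{s,t}$. The differences are minor: you put the isolated vertices into a third part with a zero row instead of splitting them off as a zero block, and you spell out the edge count that rules out $F'\cong K_{s,t}$, which the paper leaves implicit.
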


\begin{proof}
The vertices in $\overline{K_r}$ are isolated and contribute only zero eigenvalues to $Q(H)$. The two parts of $K_{s,t}$ form an equitable partition of $Q(K_{s,t})$ with quotient matrix
\[
\begin{pmatrix}
t&t\\
s&s
\end{pmatrix}.
\]
Since the quotient matrix has eigenvalues $0$ and $s+t$, we have $q(H)=s+t$ by Lemma~\ref{lemma5b}.

Let $F$ be a proper spanning subgraph of $H$. The vertices in $\overline{K_r}$ remain isolated in $F$. By deleting these isolated vertices, we obtain a proper spanning subgraph $F_0$ of $K_{s,t}$. Since the vertices in $\overline{K_r}$ contribute only zero blocks to the signless Laplacian matrix, $Q(F)$ is the direct sum of $Q(F_0)$ and an $r\times r$ zero matrix, and hence $q(F)=q(F_0)$. Since $K_{s,t}$ is connected, by applying Lemma~\ref{lemma6b} to the proper spanning subgraph $F_0$ of $K_{s,t}$, we conclude that $q(F_0)<q(K_{s,t})=s+t$. Since $q(F)=q(F_0)$, we have $q(F)<q(K_{s,t})=s+t$. \qed
\end{proof}

\section{Proofs of the main results}
In this section, we prove the main results by using the shifting technique.

\noindent \textbf{Proof of Theorem~\ref{Thm1b}.}
Let ${\cal G}=\{G_1,\ldots,G_k\}$ satisfy the hypotheses of Theorem~\ref{Thm1b}. Suppose to the contrary that ${\cal G}$ contains no rainbow matching of size $k$. We relabel the same vertex bipartition as $X=[a]$ and $Y=[b]'$. We apply the shifting technique simultaneously within $X$ and within $Y$, and fix a resulting terminal collection $S({\cal G})=\{S_1,\ldots,S_k\}$. It follows from Lemma~\ref{lemma4b} that $S({\cal G})$ contains no rainbow matching of size $k$. By Lemma~\ref{lemma1b}, we conclude that
\begin{equation}
\label{eq:shift-lower}
q(S_i)\ge q(G_i)\ge b+k-1
\end{equation}
for all $i\in[k]$.

For $j\in[k]$, put $e_j=j(k-j+1)'$. The edges $e_1,\ldots,e_k$ are pairwise disjoint.

\begin{claim}\label{claim1a}
For each $i\in[k]$, the graph $S_i$ contains $e_2,\ldots,e_{k-1}$.
\end{claim}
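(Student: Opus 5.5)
The plan is to argue by contradiction for a fixed $i\in[k]$ and $j\in[2,k-1]$, using only that $S_i$ is shifted and satisfies \eqref{eq:shift-lower}. Since the shifts were applied within $X=[a]$ and within $Y=[b]'$, Lemma~\ref{lemma3b} shows that the edge set of $S_i$ is a down-set in the componentwise order. In other words, if $xy'\in E(S_i)$, then $x_0y_0'\in E(S_i)$ for all $x_0\le x$ and $y_0\le y$.

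Suppose $e_j=j(k-j+1)'\notin E(S_i)$. By the down-set property, $S_i$ has no edge $xy'$ with $x\ge j$ and $y\ge k-j+1$. Hence every edge of $S_i$ meets the set $C=\{1,\ldots,j-1\}\cup\{1',\ldots,(k-j)'\}$. Both parts of $C$ are nonempty, since $2\le j\le k-1$. Therefore $S_i$ is a spanning subgraph of the graph $F_j$ defined as follows:
\begin{itemize}
\item the vertices $1,\ldots,j-1$ are joined to all of $Y$;
\item the vertices $1',\ldots,(k-j)'$ are joined to all of $X$;
\item there are no other edges.
\end{itemize}
By Lemma~\ref{lemma1b}-type monotonicity (Lemma~\ref{lemma6b} applied to the nontrivial component of $F_j$), we get $q(S_i)\le q(F_j)$. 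It therefore suffices to show that $q(F_j)<b+k-1$, which contradicts \eqref{eq:shift-lower}.

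To compute $q(F_j)$, I would use the equitable partition of $V(F_j)$ into four classes:
\[
X_1=[j-1],\quad X_2=[j,a],\quad Y_1=[k-j]',\quad Y_2=[k-j+1,b]'.
\]
The degrees are $b$ on $X_1$, $k-j$ on $X_2$, $a$ on $Y_1$ and $j-1$ on $Y_2$. The quotient matrix of $Q(F_j)$ is then an explicit $4\times4$ matrix in $a,b,j,k$, and by Lemma~\ref{lemma5b} its largest eigenvalue equals $q(F_j)$. The task reduces to showing that $\lambda_{\max}$ of this matrix is strictly less than $\theta=b+k-1$. To do this, I would evaluate the characteristic polynomial $\phi(x)$ at $x=\theta$ and show that $\phi(x)>0$ for all $x\ge\theta$. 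An alternative is to exhibit a positive vector $w$ with $(Q/\pi)w<\theta w$ entrywise, which by Perron--Frobenius gives the strict bound directly. I would try the vector approach first, with weights such as $w_{X_1}$ and $w_{Y_1}$ large and the other two weights small, tuned using $a\le b$ and $j-1,\,k-j\le k-2$.

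The main obstacle is this final eigenvalue inequality. The edge set $X_1\times Y_1$ carries large degree sums, namely $a+b$, so crude bounds such as $q\le\max_{uv\in E}(d_u+d_v)$ fail, and the $4\times4$ computation must genuinely use the fact that $|X_1|+|Y_1|=k-2<k-1$. If the spectral bound alone turns out not to be strict for some parameter range (for instance, $k$ small relative to $a=b$), I would fall back on combining it with the absence of a rainbow $k$-matching in $S({\cal G})$. Concretely, I would use the other members' edges, which also lie in down-sets, to produce a rainbow matching built from $e_1,\ldots,e_k$ and the members $S_{i'}$ with $i'\neq i$, giving the contradiction that way.
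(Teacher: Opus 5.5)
\paragraph{Verdict: correct approach, but the decisive step is not carried out.} Your reduction is exactly the paper's: the down-set property from Lemma~\ref{lemma3b}, the containment $S_i\subseteq F_j$ (the paper's $H_r$), Lemma~\ref{lemma6b}, and the same four-class equitable partition. The missing piece is the only step with real content, the strict inequality $\lambda_{\max}(Q(F_j)/\pi)<b+k-1$. You leave it unproved and even doubt that it is strict, so as written the proposal does not establish the claim. The fallback via the other members is unnecessary, because the spectral bound is strict for all parameters.

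\paragraph{Closing the gap with your test vector.} Write $p=k-j\ge1$, $h=j-1\ge1$ and $d=b-a\ge0$, so that $\theta=b+p+h$. Order the classes as $(X_1,X_2,Y_1,Y_2)$ and let $B$ be the quotient matrix. Your test-vector idea works with $w=(b,p,b,h)$. A direct computation gives
\[
\theta w-Bw=\bigl(ph,\ ph,\ (b+p)d+ph,\ ph\bigr),
\]
which is entrywise positive. Since $B\ge0$ (note $b-p>0$ and $a-h>0$) and $w>0$, the row-sum bound for $\mathrm{diag}(w)^{-1}B\,\mathrm{diag}(w)$ gives $\lambda_{\max}(B)<\theta$.

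Lift $w$ to the vector $\tilde w$ that is constant on each class. Since $Q(S_i)\le Q(F_j)$ entrywise, the same computation gives $Q(S_i)\tilde w\le Q(F_j)\tilde w<\theta\tilde w$. This yields $q(S_i)<\theta$ directly and bypasses Lemmas~\ref{lemma5b} and~\ref{lemma6b}.

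\paragraph{Comparison with the paper's route.} The paper instead expands the characteristic polynomial
\[
g(x)=x^4-(a+b+p+h)x^3+(ab+bp+ah+2ph)x^2-ph(a+b)x .
\]
It then substitutes $x=\theta+y$ and checks that all four coefficients are positive; the constant term, for instance, is $p(b+h+p)(bd+2dh+dp+2h^2+2hp)$. Your vector route is shorter. It also makes the source of strictness visible: the slack $ph>0$ comes precisely from $h,p\ge1$, i.e.\ from $|X_1|+|Y_1|=k-2<k-1$.

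\paragraph{Your worry about small $k$.} The concern about small $k$ relative to $a=b$ is unfounded. For $k=3$ and $a=b=n$,
\[
q(F_2)=\tfrac12\bigl(n+2+\sqrt{(n+2)^2-8}\bigr)<n+2 ,
\]
so the gap is small but strictly positive.
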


\begin{proof}
For the case of $k=2$, the assertion is vacuous. We now assume that $k\ge3$. Suppose that $e_r\notin E(S_i)$ for some $i\in[k]$ and $2\le r\le k-1$. If $uv'\in E(S_i)$ with $u\ge r$ and $v'\ge k-r+1$, then by Lemma~\ref{lemma3b}, we deduce that $e_r=r(k-r+1)'\in E(S_i)$, a contradiction. Hence, no edge of $S_i$ joins $\{r,\ldots,a\}$ to $\{(k-r+1)',\ldots,b'\}$.

Let $H_r$ be the graph obtained from $K_{a,b}$ by deleting all edges between these two sets. Since $r-1\ge1$ and $k-r\ge1$, the vertices $1,\ldots,r-1$ are adjacent to all of $Y$. However, the vertices $1',\ldots,(k-r)'$ are adjacent to all of $X$. It follows that $H_r$ is connected. Thus, $S_i$ is a spanning subgraph of $H_r$. By Lemma~\ref{lemma6b}, we have $q(S_i)\le q(H_r)$. Let $X_1=[r-1]$, $X_2=[r,a]$, $Y_1=[1,k-r]'$ and $Y_2=[k-r+1,b]'$. The partition $\pi=\{X_1,X_2,Y_1,Y_2\}$
is equitable with respect to $Q(H_r)$. Indeed, the vertices in \(X_1,X_2,Y_1,Y_2\) have degrees \(b,k-r,a,r-1\), respectively, and their neighbors are distributed among these four sets according to the quotient matrix
\[B=\begin{pmatrix}
b&0&k-r&b-k+r\\
0&k-r&k-r&0\\
r-1&a-r+1&a&0\\
r-1&0&0&r-1
\end{pmatrix},
\]
where the entries of \(B\) are obtained by summing the corresponding rows of \(Q(H_r)\) over the four parts.

Put $p=k-r$, $h=r-1$, and $s=p+h=k-1$, and $d=b-a\ge0$. The characteristic polynomial of $B$ is
$g(x):=\det(xI-B)=x^4-(a+b+t)x^3+(2ph+pb+ah+ab)x^2-ph(a+b)x$. By Lemma~\ref{lemma5b}, $q(H_r)$ is the largest eigenvalue of $B$, and hence $q(H_r)$ is a root of $g$. Substituting $a=b-d$, $t=p+h$, and $x=b+t+y$ gives
$g(b+t+y)=y^4+c_3y^3+c_2y^2+c_1y+c_0$, where
\begin{align*}
c_3={}&2b+d+3h+3p,\\
c_2={}&b^2+2bd+4bh+4bp+2dh+3dp+3h^2+8hp+3p^2,\\
c_1={}&b^2d+b^2h+b^2p+2bdh+4bdp+2bh^2+6bhp+2bp^2\\
&\quad+dh^2+5dhp+3dp^2+h^3+7h^2p+7hp^2+p^3,\\
c_0={}&p(b+h+p)\bigl(bd+2dh+dp+2h^2+2hp\bigr).
\end{align*}
Since $p,h\ge1$, $b>0$, and $d\ge0$, each of the four coefficients is positive. The positivity of these coefficients implies $g(b+s+y)>0$ for every $y\ge0$. Hence, $g(x)>0$ whenever $x\ge b+s$. Since $q(H_r)$ is a root of $g$, and $g(x)>0$ for all
$x\ge b+k-1$, we deduce that $q(H_r)<b+k-1$, contrary to Inequality~\eqref{eq:shift-lower}.\qed
\end{proof}

By Claim~\ref{claim1a}, every $S_i$ contains $e_2,\ldots,e_{k-1}$. We say that an edge \emph{occurs} in $S({\cal G})$ if it belongs to at least one $S_i$; otherwise it is absent from every member. We now consider the following two cases.

\begin{case}
Both $e_1$ and $e_k$ occur in $S({\cal G})$.
\end{case}

Let $I_1:=\{i:e_1\in E(S_i)\}$ and $I_k:=\{i:e_k\in E(S_i)\}$. In this case, both sets are nonempty. If $w\in I_1$ and $w'\in I_k$ are distinct, then we assign $e_1$ to $S_w$, $e_k$ to $S_{w'}$, and $e_2,\ldots,e_{k-1}$ bijectively to the remaining $k-2$ members. The assigned edges are pairwise disjoint and form a rainbow matching of size $k$. Hence, $w=w'$ for every $w\in I_1$ and $w'\in I_k$. Since both sets are nonempty, every member of $I_1\cup I_k$ has the
same index. Hence, $I_1=I_k=\{s\}$ for some $s\in[k]$. After relabelling the members if necessary, we may assume that $s=k$. Thus, neither $e_1$ nor $e_k$ is an edge of $S_i$ for $i\in[k-1]$. Since $e_k=k1'\notin E(S_i)$, by Lemma~\ref{lemma3b}, we conclude that no vertex of $\{k,\ldots,a\}$ is incident with an edge of $S_i$, i.e., every edge $uv'$ with $u\ge k$ would force $k1'\in E(S_i)$. Hence, $S_i\subseteq H_k=K_{k-1,b}\cup\overline{K_{a-k+1}}$.
Since $e_1\in E(H_k)\setminus E(S_i)$, $S_i$ is a proper spanning subgraph of $H_k$. By Lemma~\ref{lemma7b}, we can conclude that $q(G_i)\le q(S_i)<q(H_k)=b+k-1$, contrary to the hypothesis.

\begin{case}
\label{case2b}
At least one of $e_1$ and $e_k$ does not occur in $S({\cal G})$.
\end{case}

Suppose first that $e_1=1k'$ does not occur; equivalently, $e_1$ is absent from every $S_i$. By Lemma~\ref{lemma3b}, every edge $uv'$ with $v\ge k$ would force $1k'\in E(S_i)$. Hence, no vertex of $\{k',\ldots,b'\}$ is incident with an edge of $S_i$, and $S_i\subseteq H_1=K_{a,k-1}\cup\overline{K_{b-k+1}}$. By Lemma~\ref{lemma7b} and \eqref{eq:shift-lower}, we have $b+k-1\le q(G_i)\le q(S_i)\le q(H_1)=a+k-1$. Since $a\le b$, this chain forces $a=b$ and equality throughout. By Lemma~\ref{lemma7b}, we deduce that $S_i=H_1$. If $S_i\neq H_1$, then $S_i$ would be a proper spanning subgraph of
$H_1$. By Lemma~\ref{lemma7b}, we have $q(S_i)<q(H_1)$, a contradiction.

By Lemma~\ref{lemma-equality-shift}, we deduce that $G_i\cong H_1\cong H_k$ for each $i$. Indeed, it follows from \(a=b\) that exchanging the two bipartition classes yields $H_1=K_{a,k-1}\cup\overline{K}_{b-k+1}\cong K_{k-1,b}\cup\overline{K}_{a-k+1}=H_k$. Hence \(G_i\cong H_k\) for each \(i\).

Now, we assume that $e_1$ occurs. In Case~\ref{case2b}, the edge $e_k=k1'$ must be absent from every $S_i$. By Lemma~\ref{lemma3b}, we have $S_i\subseteq H_k$. By Lemma~\ref{lemma7b} and Inequality~ \eqref{eq:shift-lower}, we deduce that $b+k-1\le q(G_i)\le q(S_i)\le q(H_k)=b+k-1$. All the inequalities in the above derivation hold with equality, which implies that $S_i=H_k$. By Lemma~\ref{lemma-equality-shift} and the fact $S_i=H_k$, we have $G_i\cong H_k$ for each $i$.

We now show that the isomorphic copies obtained above coincide on the fixed vertex bipartition. We first consider the case $a<b$. The unique nontrivial component of $H_k$ is $K_{k-1,b}$, and its side of size $b$ must occupy all of $Y$. Hence, every copy of $H_k$ on the fixed vertex bipartition $(X,Y)$ has the form $K_{A_i,Y}$, where $A_i\subseteq X$ and $|A_i|=k-1$. If the sets $A_1,\ldots,A_k$ are not all equal, then $|\bigcup_{i=1}^kA_i|\ge k$. If $I\subseteq[k]$ is nonempty and $I\ne[k]$, then $\bigcup_{i\in I}A_i$ contains one of the sets $A_i$ and hence has size at least $k-1\ge|I|$; for $I=[k]$, the union has size at least $k$. By Hall's theorem, we deduce that distinct representatives $x_i\in A_i$ for $i\in[k]$. Since $b\ge k$, we choose distinct vertices $y_1,\ldots,y_k\in Y$. It follows that $x_iy_i\in E(G_i)$ for all $i$, a contradiction. Hence, $A_1=\cdots=A_k$.

We next consider the case $a=b$. Each $G_i\cong H_k$ is either $K_{A_i,Y}$ with $|A_i|=k-1$, or $K_{X,B_i}$ with $|B_i|=k-1$. Suppose that both types occur. Let $I$ and $J$ index the two types with $|I|=p$ and $|J|=k-p$, where $1\le p\le k-1$. For every nonempty $I'\subseteq I$, $\left|\bigcup_{i\in I'}A_i\right|\ge k-1\ge p\ge|I'|$, so Hall's theorem gives distinct representatives $x_i\in A_i$ for $i\in I$. Similarly, for every nonempty $J'\subseteq J$, $\left|\bigcup_{j\in J'}B_j\right|\ge k-1\ge k-p\ge|J'|$, and we may choose distinct representatives $y_j\in B_j$ for $j\in J$. Since $a=b\ge k$, the set $Y\setminus\{y_j:j\in J\}$ has size $b-(k-p)\ge p$, and the set $X\setminus\{x_i:i\in I\}$ has size $a-p\ge k-p$. We choose distinct $y_i$ for $i\in I$ from the first set and distinct $x_j$ for $j\in J$ from the second. It follows that the edges $x_iy_i$ ($i\in I$) and $x_jy_j$ ($j\in J$) form a rainbow matching of size $k$, a contradiction. Thus, all $G_i$ have the same orientation. The preceding Hall argument, applied to the sets $A_i$ or to the sets $B_i$, shows that all these sets are equal. Therefore, $G_1=\cdots=G_k\cong K_{k-1,b}\cup\overline{K_{a-k+1}}$. This concludes the proof of Theorem \ref{Thm1b}.\qed

\noindent \textbf{Proof of Corollary~\ref{cor1b}.}
By Theorem~\ref{Thm1b} with $k=a=b=n$, we deduce that $b+k-1=2n-1$, and that a matching of size $n$ is perfect. We can conclude that the exceptional graph is $K_{n-1,n}\cup\overline{K_1}$.\qed

\noindent \textbf{Proof of Theorem~\ref{Thm-high-members}.}
Suppose that $\nu_r({\cal G})<t$, and put $I_t:=\{i\in[m]:q(G_i)\ge b+t-1\}$.
By the hypothesis, $|I_t|\ge t$. Let $J$ be an arbitrary $t$-element subset of $I_t$. By Theorem~\ref{Thm1b}, the
members indexed by $J$ coincide with a graph in $E_{a,b,t}$. Since $|I_t|\ge t$, every pair of indices in $I_t$ can be extended to a $t$-element subset of $I_t$. Therefore, all members indexed by $I_t$ coincide with a single graph $H\in E_{a,b,t}$. Since $|I_t|\ge t$, every pair of indices in $I_t$ is contained in a $t$-element subset of $I_t$. Hence, all $G_i$ with $i\in I_t$ coincide with one graph $H\in\mathfrak E_{a,b,t}$.

We claim that every remaining member is a spanning subgraph of $H$. Suppose, to the contrary, that $e\in E(G_j)\setminus E(H)$ for some $j\notin I_t$. If $H=K_{|A|,|Y|}$ with $A\subseteq X$ and $|A|=t-1$, then the $X$-end of $e$ lies in $X\setminus A$. Since $b\ge t$, we choose $t-1$ vertices of $Y$ different from the $Y$-end of $e$ and match them bijectively to the vertices of $A$. Therefore, these edges form a matching of size $t-1$ in $H$ disjoint from $e$. If $a=b$ and $H$ has the opposite orientation, then we interchange the two parts in this construction. We assign these $t-1$ edges to distinct members indexed by $I_t$ and assign $e$ to $G_j$. The resulting rainbow matching has size $t$, a contradiction. Hence, $E(G_j)\subseteq E(H)$ for every $j\in[m]$.

Since every member is a subgraph of $H$, $\nu_r({\cal G})\le\nu(H)=t-1$. Conversely, a matching of size $t-1$ in $H$ may be assigned to $t-1$ distinct members indexed by $I_t$. Hence, $\nu_r({\cal G})=t-1$.
\qed

\noindent \textbf{Proof of Theorem~\ref{thm-quant}.}
If $T=1$, then $\nu_r(G)\ge0=T-1$, and the conclusion is immediate. We now assume that \(T\ge2\). By definition of $T$, at least $T$ members satisfy $q(G_i)\ge b+T-1$. By Theorem~\ref{Thm-high-members}, and setting $t=T$, we conclude that either $\nu_r({\cal G})\ge T$ or the stated exceptional structure with $\nu_r({\cal G})=T-1$. Hence, $\nu_r({\cal G})\ge T-1$, and equality can occur only in the stated exceptional structure.

Conversely, suppose that there is $H\in\mathfrak E_{a,b,T}$ such that $E(G_i)\subseteq E(H)$ for every $i$ and at least $T$ members equal to $H$. Since $\nu(H)=T-1$, every rainbow matching has size at most $T-1$. A matching of size $T-1$ in $H$ can be assigned to distinct members among the $T$ copies of $H$, implying $\nu_r({\cal G})=T-1$.
\qed

\noindent \textbf{Proof of Corollary~\ref{cor-finite-gap}.}
Let $\mathcal B_\varepsilon$ be the set of all collections of $m$ graphs ${\cal F}$ on the fixed vertex bipartition such that $\nu_r({\cal F})<t$ and $d_\triangle({\cal F},\mathfrak C_t)\ge\varepsilon$. Since the bipartition is fixed, there are only finitely many possible graphs on this bipartition, and hence only finitely many collections of
$m$ graphs. Therefore, $B_\varepsilon$ is finite.

If $\mathcal B_\varepsilon=\emptyset$, then every collection satisfying $\nu_r({\cal G})<t$ already has $d_\triangle({\cal G},\mathfrak C_t)<\varepsilon$, and any positive $\Delta$ satisfies the conclusion. Suppose now that $\mathcal B_\varepsilon\ne\emptyset$, and put $M_\varepsilon:=\max_{{\cal F}\in\mathcal B_\varepsilon} q_t^\downarrow({\cal F})$. We claim that $M_\varepsilon<b+t-1$. Otherwise some ${\cal F}\in\mathcal B_\varepsilon$ would have at least $t$ members with signless Laplacian spectral radius at least $b+t-1$. By Theorem~\ref{Thm-high-members}, we deduce that ${\cal F}\in\mathfrak C_t$, contrary to the definition of $\mathcal B_\varepsilon$.

Let $\Delta:=(b+t-1-M_\varepsilon)/2>0$. It follows that $b+t-1-\Delta>M_\varepsilon$. Let ${\cal G}$ satisfy $\nu_r({\cal G})<t$ and $q_t^\downarrow({\cal G})\ge b+t-1-\Delta$.

If $G\in B_\varepsilon$, then by the definition of $M_\varepsilon$, we have $q_t^\downarrow(G)\le M_\varepsilon$, which contradicts $q_t^\downarrow(G)\ge b+t-1-\Delta>M_\varepsilon$. Therefore, $G\notin B_\varepsilon$, and hence   $d_4(G,\mathcal C_t)<\varepsilon$.\qed

\section{Conclusions and final remarks}
This work mainly focuses on the existence of rainbow matchings in a collection of bipartite graphs based on signless Laplacian spectral conditions. In fact, a perfect matching and a rainbow Hamilton cycle can be regarded as special $k$-factors when $k=1$ and $k=2$, where a $k$-factor refers to a spanning subgraph $H$ of $G$ such that ${d_H(v)}=k$ for each $v\in V(G)$. The sufficient condition in terms of the spectral radius for the existence of rainbow Hamilton cycles in a collection of bipartite graphs has been established, as follows.

\begin{theorem}[\cite{Chen}]\label{Thm2b}
Let $n$ be a positive integer with $n\geq2$, and let ${\cal G}=\{{G_1},\ldots,{G_{2n}}\} $ be a collection of balanced bipartite graphs on the same vertex bipartition $(X,Y)$. If $\rho({{G_i}})\ge\rho({{K_{1,n-1}}\cup\overline{{K_{n-1,1}}}})$
for each $i\in [2n]$, then ${\cal G}$ admits a rainbow Hamilton cycle unless ${G_1}={G_2}=\cdots={G_{2n}}\cong{K_{1,n-1}}\cup\overline{{K_{n-1,1}}}$.
\end{theorem}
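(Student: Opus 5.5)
I read the exceptional graph $K_{1,n-1}\cup\overline{K_{n-1,1}}$ as the balanced bipartite graph $B_n$ obtained from $K_{n,n}$ by deleting $n-1$ of the edges at one vertex. So $B_n$ has a vertex of degree $1$, is non-Hamiltonian, and $e(B_n)=n^2-n+1$. The plan has two steps: convert the adjacency condition into an edge-count condition, then prove a rainbow Hamilton cycle result under that edge condition using a Hall-type assignment.

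\emph{Step 1 (spectral to edges).} For a bipartite graph $G$ we have $\rho(G)^2\le e(G)$, since $\rho^2$ is the largest eigenvalue of $A^2$ and $\mathrm{tr}(A^2)=2e$ splits equally between $\pm\rho$. So I would compute $\rho(B_n)$ from the equitable partition into four classes: the degree-one vertex $u$, its neighbour $w$, the rest of $w$'s side, and the rest of $u$'s side. Lemma~\ref{lemma5b} then gives a quartic, and the aim is to verify $\rho(B_n)^2>n^2-n$. It follows that $e(G_i)\ge n^2-n+1$ for every $i\in[2n]$, so each $G_i$ misses at most $n-1$ edges of $K_{n,n}$.

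\emph{Step 2 (structure of a single member).} If some $G_i$ has a vertex of degree at most $1$, then that vertex alone accounts for at least $n-1$ missing edges. Hence $G_i=B_n$ exactly, or the vertex is isolated and we get a contradiction. Moreover, if $G_i=B_n$, then by Lemma~\ref{lemma6b} applied to the connected graph $B_n$ we get $\rho(G_i)=\rho(B_n)$ only for $G_i=B_n$ itself. So each member is either $B_n$ or has minimum degree at least $2$ and misses at most $n-1$ edges.

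\emph{Step 3 (rainbow cycle via Hall).} Fix a Hamilton cycle $C$ of $K_{n,n}$ and form the auxiliary bipartite graph between the $2n$ edges of $C$ and the $2n$ members, joining $e$ to $G_i$ when $e\in E(G_i)$. A perfect matching here is exactly a rainbow Hamilton cycle. Each $G_i$ misses at most $n-1$ edges, so $C$ is ``bad'' for $G_i$ in few positions. The plan is to choose $C$ so that every member contains many edges of $C$ and every edge of $C$ lies in many members, and then check Hall's condition. Choosing $C$ uniformly at random gives an expected $\frac{2n(n-1)}{n^2}<2$ missing edges per member. I would first try a first-moment or averaging argument, supplemented by a local switching argument, to find such a $C$ when not all members equal the same $B_n$.

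The delicate cases are these. Several members equal $B_n$ with different degree-one vertices, or with different unique neighbours. Or a single vertex $v$ has low degree in many members. I would treat these by hand. If the degree-one vertices or their unique neighbours differ between two copies, then the two edges of $C$ at the low-degree vertex can be routed through distinct members. If all members equal the same $B_n$, this is the stated exception.

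The main obstacle is Step 3: verifying Hall's condition uniformly for small $n$ and for near-extremal collections. There, a deficient set of members all miss the same few edges at a common vertex, and the cycle $C$ must be chosen adaptively around that vertex rather than at random.
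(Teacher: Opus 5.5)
\noindent The paper gives no proof of this statement. It only quotes it from \cite{Chen} in the concluding section, so your proposal has to stand on its own.

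Your reading of the exceptional graph as $K_{n,n}$ minus $n-1$ edges at one vertex is the sensible one; read literally as a star plus isolated vertices, the statement would be false. Steps 1 and 2 are a valid reduction, and Step 1 is easier than you plan. Since $K_{n,n-1}\cup K_1$ is a proper subgraph of the connected graph $B_n$, Perron--Frobenius gives $\rho(B_n)>\sqrt{n(n-1)}$ without any quartic. Then $\rho^2\le e$ yields $e(G_i)\ge n^2-n+1$. Lemma~\ref{lemma6b} concerns $q$, not $\rho$, and is not needed in Step 2: the edge count alone shows that a vertex of degree at most $1$ forces $G_i$ to be a copy of $B_n$.

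The genuine gap is Step 3, which is where the entire content of the theorem lies. After the reduction you must prove a rainbow Moon--Moser theorem at the exact threshold $n^2-n+1$, including that the only obstruction is $2n$ identical copies of one $B_n$. You give no argument for this.

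Your averaging estimate controls the wrong quantity. For a fixed Hamilton cycle $C$, Hall's condition fails exactly when some $s$ edges of $C$, with $1\le s\le n-1$, are simultaneously absent from at least $2n-s+1$ members. The number of edges of $C$ that each individual member misses says nothing about this: if every member misses the same single edge of $C$, each misses only one edge, yet Hall fails.

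Nor can a good $C$ be found obliviously. For $n=3$, take five identical copies of $B_3$ whose degree-one vertex $y$ is adjacent to $x$, plus one copy of $K_{3,3}$. Every Hamilton cycle avoiding $yx$ contains the two other edges at $y$, which are absent from five members, so Hall fails. Cycles through $yx$ do work.

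So $C$ must be built around the low-degree vertices. You need an actual argument covering every way such vertices and their neighbourhoods can be distributed among the members. This includes:
\begin{itemize}
\item pendant vertices in different members that coincide or share a neighbour;
\item members with two vertices each missing about $n/2$ edges;
\item an edge missing from all members;
\item small $n$.
\end{itemize}
The argument must end by showing that Hall fails for every $C$ only in the identical-$B_n$ case. Saying these cases will be ``treated by hand'' does not supply that argument. As written, the proposal establishes only the reduction to the edge-count problem, not the theorem.
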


Inspired by these existing spectral results, it is worth considering to establish signless Laplacian spectral conditions for the existence of rainbow Hamilton cycles and rainbow $k$-factors in a collection of bipartite graphs.

\section*{Data availability}
No data was used for the research described in the paper.

\section*{Declaration of conflicts of interest}
The authors declare that there are no known competing financial interests or personal relationships that could have appeared to influence the work reported in the paper.

\end{document}